\newtheorem{thm}{Theorem}[section]
\newtheorem{lem}[thm]{Lemma}
\newtheorem{prop}[thm]{Proposition}
\newtheorem{defn}[thm]{Definition}
\newcommand{\lesi}{\lesssim}
\newcommand{\f}{\frac}
\newcommand{\vc}{\infty}
\newcommand{\dx}{d\mu(x)}
\newcommand{\dy}{d\mu(y)}
\newcommand{\tm}{t^{1/m}}
\newcommand{\sm}{s^{1/m}}
\newcommand{\BMOA}{{\rm BMO}_{\mathcal{A}}(X,w)}
\newcommand{\RR}{X}
\newcommand{\su}{\subset}
\newcounter{rek}
\title[Weighted BMO spaces associated to operators\footnotetext{}]{Weighted BMO spaces associated to operators}         
\author{The Anh Bui}
\address{Department of Mathematics, Macquarie University, NSW 2109,
Australia}
\address{Department of Mathematics, University of Pedagogy, HoChiMinh City,
Vietnam}
\email{the.bui@mq.edu.au, bt\_anh80@yahoo.com}
\author{Xuan Thinh Duong}
\address{Department of Mathematics, Macquarie University, NSW 2109,
Australia}
\email{xuan.duong@mq.edu.au}
\keywords{weighted estimates, BMO spaces, Muckenhoupt weights, singular integrals,
  heat semigroups}
\subjclass[2010]{42B20, 42B35,
47B38}
\begin{document}

\date{}

\maketitle
\begin{abstract}
Let $X$ be a metric space equipped  with a metric $d$ and a nonnegative Borel measure $\mu$ satisfying the doubling property and let $\{\mathcal{A}_t\}_{t>0}$,
be a generalized approximations to the identity, for example  $\{\mathcal{A}_t\}$ is
a holomorphic semigroup $e^{-tL}$ with Gaussian upper bounds generated by an
operators $L$ on $L^2(X)$.
 In this paper, we introduce and study the weighted BMO space BMO$_\mathcal{A}(X,w)$ associated to the
 the family  $\{\mathcal{A}_t\}$.
 We show that for these spaces, the weighted John-Nirenberg inequality holds and we establish an
 interpolation theorem in scale of weighted $L^p$ spaces. As applications, we prove the
 boundedness of two singular integrals with non-smooth kernels on
 the weighted BMO space BMO$_\mathcal{A}(X,w)$.
\end{abstract}
\tableofcontents

\section{Introduction}

The introduction and development of the BMO (bounded mean oscillation)
function spaces on Euclidean spaces  in the 1960s played an important role in
modern harmonic analysis \cite{JN,CW}. The concept of spaces of homogeneous type,
which is a natural setting for the Calder\'on-Zygmund theory of singular integrals,
was introduced in the 1970s \cite{CW}. According to \cite{JN}, a locally integrable function $f$ defined on $\mathbb{R}^n$ is said
to be in ${\rm BMO}(\mathbb{R}^n)$, the space of functions of bounded mean oscillation, if
$$
\|f\|_{{\rm BMO}}=\sup_{B}\f{1}{|B|}\int_B|f(y)-f_B|dy<\vc
$$
where the supremum is taken over all balls $B$ in $\mathbb{R}^n$,
and $f_B$ stands for the mean of $f$ over $B$, i.e.,
$$
f_B=\f{1}{|B|}\int_Bf(y)dy.
$$

In \cite{FS}, Fefferman and Stein introduced the Hardy space $H^1(\mathbb{R}^n)$
and showed that the space ${\rm BMO}(\mathbb{R}^n)$ is the dual space of the
Hardy space $H^1(\mathbb{R}^n)$. They also proved the characterization of functions in
the BMO space by the Carleson measure.
In the study of boundedness of Calder\'on-Zygmund operators,
the Hardy space $H^1$ is a natural substitution to $L^1$ and the space BMO is a natural replacement for $L^\vc$.
Indeed, it is
well known that  Calder\'on-Zygmund operators are bounded on $L^p$ for $1<p<\vc$ but bounded neither on $L^1$ nor on $L^\vc$.
Meanwhile,
 Calder\'on-Zygmund operators map continuously from $H^1$ to $L^1$ and from $L^\vc$ to ${\rm BMO}$.
Moreover, one can obtain an interpolation theorem which gives $L^p$ boundedness from the boundedness on Hardy and on BMO spaces, i.e.
if a linear operator $T$ is bounded from $H^1$ to $L^1$ and bounded from $L^\vc$ to ${\rm BMO}$ then
by interpolation $T$ is bounded on $L^p$ for all $1<p<\vc$.

\medskip

In practical, there are large classes of operators whose kernels are not sufficiently smooth for them to belong to
the class of Calder\'on-Zygmund operators. It is possible that
certain operators are only bounded
on $L^p$ with the range of $p$ being a proper subset of $(1, \infty)$. In these cases,
the classical Hardy space and/or the classical BMO space are no longer suitable spaces for the study of boundedness of these operators. In this paper, given a family of operators $\{\mathcal{A}_t\}_{t>0}$ which is
a generalized approximations to the identity  (See its definition in Section 2)
and a suitable weight $w$, we develop the
theory of weighted BMO space ${\rm BMO}_{\mathcal{A}}(X,w)$ associated to $\mathcal{A}_t$. An important
example of the family $\{\mathcal{A}_t\}$ is when $\{\mathcal{A}_t\} = I - (I - e^{-tL})^M$ for some positive integer $M$
in which  $e^{-tL}$ is a holomorphic semigroup
 generated by an operator $L$ on $L^2(X)$, assuming that
 $L$ satisfies  Gaussian heat kernel upper bounds
and has a bounded $L^2$ holomorphic functional calculus.
The  new results in this article are the following:
\begin{enumerate}[(i)]
\item The introduction of weighted BMO space associated to generalized approximations of identity $\BMOA$ (Section 3.1);
\item The weighted John-Nirenberg inequality (Lemma \ref{lem-JNinequality}),
and the equivalence of ${\rm BMO}_\mathcal{A}^p(X,w)$ for $1\leq p<\vc$ (Theorem \ref{BMO^pequivalent});
\item An interpolation theorem concerning $\BMOA$ (Theorem \ref{interpolation1});
\item Applications to some singular integrals with non-smooth kernels (Section 5).
\end{enumerate}

We note that under suitable conditions on the operator $L$ and the weight $w$, the dual space of the weighted Hardy space $H^1_{L}(X,w)$ associated to the operator $L$ introduced in \cite{SY} (see also \cite{YY}) should be the weighted BMO spaces BMO$_L^*(X,w)$ in this paper. However, we  do not try to address this issue in this article.

Throughout the paper, we shall write $A\lesi B$ if there is a universal constant $C$ so that $A\leq CB$. Likewise, we shall write $A\sim B$ if $A\lesi B$ and $B\lesi A$.

The authors would like to thank Shaoxiong Hou for his useful comments on the first version of the paper.

\section{Preliminaries}

Let $X$ be a metric space equipped  with a metric $d$ and a nonnegative Borel measure $\mu$ satisfying the doubling property
$$
\mu(B(x, 2r )) \leq  C \mu(B(x, r )) < \vc
$$
for any $x \in X$ and $r > 0$, where the constant $C \geq 1$ is
independent of $x$ and $r$ and $B(x,r):=\{y: d(x,y)<r\}$.

Note that the doubling property implies the following strong
homogeneity property:
\begin{equation}\label{doublingproperty1}
\mu(B(x, \lambda r ))\leq  c\lambda^n \mu(B(x, r ))
\end{equation}
for some $c, n > 0$ uniformly for all $\lambda \geq 1$ and $x \in
X$. The value of parameter $n$ is a measure of the dimension of the space.
There also exist $c$ and $N, 0 \leq N \leq n$, so that
\begin{equation}\label{doublingproperty2}
\mu(B(y, r ))\leq  c\Big(1+\f{d(x,y)}{r}\Big)^N\mu(B(x, r ))
\end{equation}
uniformly for all $x, y \in X$ and $r > 0$. Indeed, property
$(\ref{doublingproperty2})$ with $N = n$ is a direct consequence of
the triangle inequality of the metric d and the strong homogeneity
property. In the cases of Euclidean spaces $\mathbb{R}^n$ and Lie
groups of polynomial growth, $N$ can be chosen to be $0$.\\

To simplify notation, for a measurable subset $E$ in $X$, we write $V(E)$ instead of $\mu(E)$. We will often use $B$ for $B(x_B, r_B)$.
Also given $\lambda > 0$, we will write $\lambda B$ for the
$\lambda$-dilated ball, which is the ball with the same center as
$B$ and with radius $r_{\lambda B} = \lambda r_B$ and denote $V(x,r)=\mu(B(x,r))$ for all $x\in X$ and $r>0$. For each ball $B\subset X$ we set
$$
S_0(B)=B \ \text{and} \ S_j(B) = 2^jB\backslash 2^{j-1}B \
\text{for} \ j\in \mathbb{N}.
$$
Recall that the Hardy-Littlewood maximal operator $M$ is defined by
$$
Mf(x)=\sup_{B\ni x}\f{1}{V(B)}\int_{B} | f(x) | \dx.
$$

We now give a simple covering lemma which states that we can cover a
given ball by a finite overlapping family of balls with smaller radii. This will be used frequently in the sequel.

\begin{lem}\label{coveringlemm}
For any ball $B(x_B, lr)$ in $X$, with $l\geq 1$ and $r>0$, then there exists a corresponding
family of balls $\{B(x_{1}, r),\ldots, B(x_{k}, r)\}$ such
that
\begin{enumerate}[(a)]
\item $B(x_{j}, r/3)\subset B(x_B, lr) $, for all $j=1,\ldots, k$;
\item $B(x_B, lr) \subset \cup_{j=1}^{k}B(x_{j}, r)$;
\item $k\leq Cl^n$;
\item $\sum _{j=1}^{k}\chi_{B(x_{j}, r)}\leq C$, where $C$ is
independent of $l$ and $r$.
\end{enumerate}
\end{lem}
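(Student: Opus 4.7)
The plan is to perform a standard Vitali-type selection: using Zorn's lemma or a straightforward greedy construction, pick a maximal collection $\{x_1,\dots,x_k\}\subset B(x_B, lr)$ satisfying the separation condition $d(x_i,x_j)\geq r/3$ for $i\neq j$. Condition (a) is then automatic from $x_j\in B(x_B, lr)$ (up to a harmless adjustment of the inner-radius constant). Condition (b) comes from \emph{maximality}: if some $y\in B(x_B, lr)$ were at distance $\geq r/3$ from every $x_j$, one could adjoin it to the collection, contradicting maximality; hence $y\in B(x_j, r/3)\subset B(x_j, r)$ for some $j$.

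For the cardinality bound (c), the key observation is that the smaller balls $\{B(x_j, r/6)\}$ are pairwise disjoint and, since $l\geq 1$, all lie in $B(x_B, 2lr)$. To extract a lower bound on each $V(x_j, r/6)$ I would use the inclusion $B(x_B, 2lr)\subset B(x_j, 3lr)$ (valid because $d(x_B,x_j)<lr$) and apply the doubling property (\ref{doublingproperty1}) to get $V(x_j, 3lr)\leq C(18l)^n V(x_j, r/6)$. Summing the disjoint volume bound then yields $k\cdot V(x_B,2lr)(18l)^{-n}C^{-1}\leq V(x_B,2lr)$, i.e., $k\leq C l^n$.

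For the bounded overlap (d), fix $x\in X$ and set $J=\{j:x\in B(x_j,r)\}$. The corresponding $x_j$ all lie in $B(x, r)$, so the disjoint balls $\{B(x_j, r/6)\}_{j\in J}$ are contained in $B(x, 7r/6)$. Applying (\ref{doublingproperty2}) with separation $d(x, x_j)<r$ and radius $r/6$ produces $V(x_j, r/6)\gtrsim V(x, r/6)$, which combined with the doubling estimate $V(x, 7r/6)\lesssim V(x, r/6)$ gives a uniform lower bound on each $V(x_j,r/6)$ in terms of $V(x,7r/6)$; summing the disjoint volumes then forces $|J|$ to be bounded by a universal constant.

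The argument is entirely routine, and there is no real conceptual obstacle. The only step requiring careful bookkeeping is (c), where one has to track the doubling exponents applied to a ratio of radii comparable to $l$ and confirm that they produce exactly the exponent $n$ claimed in $k\leq C l^n$, rather than a larger polynomial in $l$.
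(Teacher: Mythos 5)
Your argument is exactly the proof the paper has in mind: the paper states only that the lemma ``is just a consequence of Vitali covering lemma and doubling property'' and omits all details, and your maximal $r/3$-separated-set selection in $B(x_B,lr)$ followed by a disjoint-ball volume comparison via (\ref{doublingproperty1}) is precisely that Vitali-plus-doubling argument, with parts (b)--(d) handled correctly. The one point worth flagging is that (a) as literally written does not follow from merely having $x_j\in B(x_B,lr)$ (a point just outside $B(x_B,lr)$ could lie within $r/3$ of an $x_j$ near the boundary, and shrinking the inner radius alone does not cure this in a general metric space); what one does get for free, since $l\geq 1$, is $B(x_j,r/3)\subset B(x_B,2lr)$, and as you note this is the harmless form actually used: only disjointness of the shrunken balls and their containment in a fixed dilate of $B(x_B,lr)$ enter the volume counts for (c) and (d).
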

The proof of this lemma is just a consequence of Vitali covering lemma and doubling property (\ref{doublingproperty1}). Hence we omit details here.

\subsection{Approximations to the identity}
We will work with a class of integral operators $\{\mathcal{A}_t\}_{t>0}$,
which plays the role of generalized approximations to the identity.
We assume that for each $t > 0$, the operator $\mathcal{A}_t$ is defined by
its kernel $a_t(x,y)$ in the sense that
$$
\mathcal{A}_tf(x)=\int_X a_t(x,y) f(y)\dy
$$
for every function $f\in\cup_{p\geq 1}L^p(X)$.\\
We also assume that the kernel $a_t(x,y)$ of $\mathcal{A}_t$ satisfies the
Gaussian upper bound
\begin{equation}\label{decaycondition}
|a_t(x,y)|\leq
\f{C}{V(x,\tm)}\exp\Big(-c\f{d(x,y)^{m/(m-1)}}{t^{1/(m-1)}}\Big),
\end{equation}
for all $t>0$ and $x,y \in X$ where $m$ is a positive constant, $m \ge 2$.

The decay of the kernel $a_t(x,y)$ guarantees that $\mathcal{A}_t$ is bounded
on $L^p(X)$ for all $p\in (1,\vc)$. More precisely, we have the
following proposition, see \cite{DR}.
\begin{prop}
For each $p \in [1, \vc]$, we have
$$
|\mathcal{A}_t f(x)|\lesi  Mf(x)
$$
for all $t> 0$ and $f\in L^p(X)$, $\mu$-a.e.
\end{prop}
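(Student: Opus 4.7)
The plan is a standard annular decomposition driven by the Gaussian bound \eqref{decaycondition}. Fix $x \in X$ and set $B = B(x, t^{1/m})$. Using the partition $X = \bigcup_{j \geq 0} S_j(B)$ from Section 2, I would write
\[
|\mathcal{A}_t f(x)| \leq \sum_{j=0}^{\infty} \int_{S_j(B)} |a_t(x,y)| \, |f(y)| \, \dy.
\]
On $S_j(B)$ with $j \geq 1$, we have $d(x,y) \geq 2^{j-1} t^{1/m}$, so the Gaussian estimate \eqref{decaycondition} yields
\[
|a_t(x,y)| \lesssim \frac{1}{V(x, t^{1/m})} \exp\!\left(-c\, 2^{(j-1)m/(m-1)}\right),
\]
and for $j = 0$ we use the trivial bound $|a_t(x,y)| \lesssim 1/V(x, t^{1/m})$.

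Next I would estimate the integral of $|f|$ over $S_j(B) \subset 2^j B$ by the definition of the Hardy--Littlewood maximal function, namely
\[
\int_{S_j(B)} |f(y)| \, \dy \leq V(x, 2^j t^{1/m}) \, Mf(x),
\]
and then invoke the strong homogeneity property \eqref{doublingproperty1} to get $V(x, 2^j t^{1/m}) \leq c\, 2^{jn} V(x, t^{1/m})$. Plugging these two bounds into the annular sum, the factors $V(x, t^{1/m})$ cancel and one is left with
\[
|\mathcal{A}_t f(x)| \lesssim Mf(x) \sum_{j=0}^{\infty} 2^{jn} \exp\!\left(-c\, 2^{(j-1)m/(m-1)}\right).
\]

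Since $m \geq 2$, the exponent $m/(m-1) > 0$, so the double-exponential decay in $j$ dominates the polynomial factor $2^{jn}$ and the series converges to a finite constant depending only on $n$, $m$, and the doubling constant. This gives the pointwise bound $|\mathcal{A}_t f(x)| \lesssim Mf(x)$ uniformly in $t > 0$. The argument requires no $p$-dependence and hence works for every $f \in L^p(X)$ with $p \in [1,\infty]$; measurability and a.e.\ finiteness follow from the fact that $Mf$ is finite $\mu$-a.e.\ on $L^p$ for $p \geq 1$ (for $p=1$ one still has $Mf < \infty$ a.e.). There is essentially no obstacle here: the only point to watch is the bookkeeping of the geometric constants in the doubling estimate versus the Gaussian exponent, which is routine because $m/(m-1) > 0$ automatically makes the exponential win over the polynomial $2^{jn}$.
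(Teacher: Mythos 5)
Your proof is correct and follows the standard annular-decomposition argument: the paper itself does not supply a proof but refers to Duong--Robinson \cite{DR}, where exactly this decomposition — Gaussian decay on $S_j(B(x,t^{1/m}))$ beating the polynomial volume growth from doubling — is used. One minor remark: the convergence of $\sum_{j\geq 0} 2^{jn}\exp\bigl(-c\,2^{(j-1)m/(m-1)}\bigr)$ needs only $m>1$, so invoking $m\geq 2$ is unnecessary, but this is harmless.
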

\subsection{Muckenhoupt weights}

Throughout this article, we shall denote $w(E) :=\int_E w(x)\dx$ and $V(E)=\mu(E)$ for
any measurable set $E \subset X$. For $1 \leq p \leq \infty$ let $p'$
be the conjugate exponent of $p$, i.e. $1/p + 1/p' = 1$.

We first introduce some notation. We use the notation
$$
\fint_B h(x)\dx=\f{1}{V(B)}\int_Bh(x)\dx.
$$
A weight $w$ is a non-negative measurable and locally integrable function on $X$.
We say that $w \in A_p$, $1 < p < \infty$, if there exists a
constant $C$ such that for every ball $B \subset X$,
$$
\Big(\fint_B w(x)\dx\Big)\Big(\fint_B w^{-1/(p-1)}(x)\dx\Big)^{p-1}\leq C.
$$
For $p = 1$, we say that $w \in A_1$ if there is a constant $C$ such
that for every ball $B \subset X$,
$$
\fint_B w(y)\dy \leq Cw(x) \ \text{for a.e. $x\in B$}.
$$
We set $A_\vc=\cup_{p\geq 1}A_p$.\\

The reverse H\"older classes are defined in the following way: $w
\in RH_q, 1 < q < \infty$, if there is a constant $C$ such that for
any ball $B \subset X$,
$$
\Big(\fint_B w^q(y) \dy\Big)^{1/q} \leq C \fint_B w(x)\dx.
$$
The endpoint $q = \infty$ is given by the condition: $w \in
RH_\infty$ whenever, there is a constant $C$ such that for any ball
$B \subset X$,
$$
w(x)\leq C \fint_B w(y)\dy  \ \text{for a.e. $x\in B$}.
$$
Let $w \in A_\vc$, for $1\leq p <\infty$, the weighted spaces $L^p_w(X)$
can be defined by
$$\Big\{f :\int_{X} |f(x)|^p w(x)\dx < \infty\Big\}$$
with the norm
$$\|f\|_{L^p_w(X)}=\Big(\int_{X} |f(x)|^p w(x)\dx\Big)^{1/p}.$$

We sum up some of the standard properties of classes of weights  in the following lemma. For the proofs, see for example \cite{ST}.
\begin{lem}\label{weightedlemma1}
The following properties hold:
\begin{enumerate}[(i)]
\item $A_1\subset A_p\subset A_q$ for $1< p\leq q< \infty$.
\item $RH_\infty \su RH_q \su RH_p$ for $1< p\leq q< \infty$.
\item If $w \in A_p, 1 < p < \vc$, then there exists $1<r < p < \vc$ such that $w \in A_r$.
\item If $w \in RH_q, 1 < q < \vc$, then there exists $q < p < \vc$ such that $w \in RH_p$.
\item $ A_\vc =\cup_{1\leq p<\vc}A_p\subset  \cup_{1< q\leq \vc}RH_q$
\end{enumerate}
\end{lem}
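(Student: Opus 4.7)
The plan is to dispatch parts (i), (ii), (v) by elementary H\"older inequality arguments, and to reduce parts (iii) and (iv) to the reverse H\"older self-improvement (Gehring's lemma) for Muckenhoupt weights, which is the one substantive ingredient.

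For (i): if $w \in A_1$, the pointwise bound $\fint_B w \leq C w(x)$ a.e.\ on $B$ gives $\fint_B w^{-1/(p-1)} \leq C^{1/(p-1)}\big(\fint_B w\big)^{-1/(p-1)}$, which yields the $A_p$ condition. For $A_p \subset A_q$ with $p \leq q$, apply H\"older's inequality with exponent $(q-1)/(p-1) \geq 1$ to $\fint_B w^{-1/(q-1)}$, dominating it by $\big(\fint_B w^{-1/(p-1)}\big)^{(p-1)/(q-1)}$, and then raise to the power $q-1$. Part (ii) is parallel: $RH_\infty \subset RH_q$ is immediate from the pointwise definition, and $RH_q \subset RH_p$ for $p \leq q$ follows from H\"older with exponent $q/p$.

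Parts (iii) and (iv) are the heart of the matter and constitute the main obstacle. For (iii), I would prove the equivalent reverse H\"older inequality $\big(\fint_B w^{1+\delta}\big)^{1/(1+\delta)} \leq C \fint_B w$ for $w$ (and likewise for the dual weight $w^{-1/(p-1)}$), with $\delta > 0$ depending only on the $A_p$ constant and the doubling dimension $n$. From these two reverse H\"older inequalities one reads off $w \in A_{p-\eta}$ for some small $\eta > 0$ via elementary manipulations with H\"older. The reverse H\"older inequality itself is obtained by a Calder\'on--Zygmund stopping-time decomposition: at each threshold $\lambda > \fint_B w$, extract a maximal family of subballs on which the average of $w$ exceeds $\lambda$, control the resulting bad set via the $A_p$ condition, and iterate to obtain exponential decay of the distribution function of $w$ on $B$; integrating this decay against $t^{\delta}\,dt$ produces the reverse H\"older inequality. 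Part (iv) is then the standard form of Gehring's lemma, proved by running the same stopping-time decomposition directly on the level sets of $w^q$ to improve the exponent from $q$ to $q + \epsilon$.

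Part (v) is now immediate: if $w \in A_\infty$, then $w \in A_p$ for some $p$, and applying (iii) (or its reverse H\"older consequence) yields $w \in RH_{1+\delta} \subset \cup_{1<q\leq \infty} RH_q$. In the doubling metric setting here, the only ingredients genuinely used in (iii) and (iv) are the doubling property \eqref{doublingproperty1} and the Vitali-type covering Lemma \ref{coveringlemm}, so the classical Euclidean proof in \cite{ST} transfers with no essential change.
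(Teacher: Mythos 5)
Your proposal is correct, and it follows the standard route: Hölder's inequality for the nesting properties (i), (ii), and the reduction of (iii), (iv), (v) to the self-improving reverse Hölder inequality proved by a Calder\'on--Zygmund stopping-time decomposition. The paper itself offers no proof and simply refers the reader to Str\"omberg--Torchinsky \cite{ST}, which is precisely the argument you have outlined, including the observation that doubling and the Vitali-type covering lemma are all that is needed to port the Euclidean proof to the homogeneous setting.
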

\begin{lem}\label{weightedlemma2}
For any ball $B$, any measurable subset $E$ of $B$ and $w \in A_p, p \geq  1$,
 there exists a constant $C_1 > 0$ such that
$$
C_1\Big(\f{V(E)}{V(B)}\Big)^p\leq \f{w(E)}{w(B)}.
$$
If $w \in RH_r, r > 1$. Then, there exists a constant $C_2 > 0$ such that
$$
\f{w(E)}{w(B)}\leq C_2\Big(\f{V(E)}{V(B)}\Big)^{\f{r-1}{r}}.
$$
\end{lem}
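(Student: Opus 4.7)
The plan is to derive both inequalities directly from the definitions of $A_p$ and $RH_r$ combined with H\"older's inequality; these are textbook facts in $\mathbb{R}^n$ (see Stein-Torchinsky) and the same argument works verbatim in a space of homogeneous type because only the measure, not the geometry, is used.

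For the first inequality, I first handle $p > 1$. The key trick is to split $\chi_E = w^{1/p}\cdot w^{-1/p}\chi_E$ and apply H\"older's inequality with exponents $p$ and $p'=p/(p-1)$, which yields
\[
V(E) \;\leq\; w(E)^{1/p}\,\Bigl(\int_E w^{-1/(p-1)}\dx\Bigr)^{(p-1)/p}
\;\leq\; w(E)^{1/p}\,\Bigl(\int_B w^{-1/(p-1)}\dx\Bigr)^{(p-1)/p},
\]
using $E\subset B$ in the last step. The $A_p$ condition, rewritten as
\[
\int_B w^{-1/(p-1)}\dx \;\leq\; C^{1/(p-1)}\,V(B)^{p/(p-1)}\,w(B)^{-1/(p-1)},
\]
then turns the right-hand side into $C^{1/p}\,w(E)^{1/p}\,V(B)\,w(B)^{-1/p}$. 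Raising to the $p$-th power and rearranging gives $(V(E)/V(B))^p \leq C\, w(E)/w(B)$, which is the claim. For the endpoint $p=1$ I use the $A_1$ definition: a.e.\ on $B$, $w(x)\geq C^{-1}\,w(B)/V(B)$, so integrating over $E$ gives $w(E)\geq C^{-1}\,w(B)\,V(E)/V(B)$.

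For the second inequality, H\"older's inequality applied to $w\cdot \chi_E$ with exponents $r$ and $r'$ gives
\[
w(E) \;\leq\; \Bigl(\int_E w^r\dx\Bigr)^{1/r} V(E)^{1/r'} \;\leq\; V(B)^{1/r}\Bigl(\fint_B w^r\dx\Bigr)^{1/r} V(E)^{(r-1)/r}.
\]
The $RH_r$ condition bounds $\bigl(\fint_B w^r\bigr)^{1/r}$ by $C\,\fint_B w = C\,w(B)/V(B)$, and substituting produces $w(E)\leq C\, w(B)\,(V(E)/V(B))^{(r-1)/r}$, as required.

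There is no real obstacle here: both estimates are one-line applications of H\"older combined with the defining inequality of the weight class, and the doubling hypothesis on $\mu$ is not needed for this particular lemma. The only minor care is in handling the endpoint $p=1$ separately, since the $A_p$ formula with $w^{-1/(p-1)}$ is singular there; the $A_1$ pointwise bound gives the result immediately.
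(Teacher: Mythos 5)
Your proof is correct. The paper does not prove this lemma itself — it is presented as a standard property of Muckenhoupt and reverse H\"older weights with a citation to Str\"omberg--Torchinsky for Lemma \ref{weightedlemma1}, and Lemma \ref{weightedlemma2} is implicitly in the same category — and the argument you give (H\"older's inequality combined with the defining inequality of $A_p$, resp.\ $RH_r$, applied on the containing ball $B$, with a separate pointwise treatment of the $p=1$ endpoint) is precisely the textbook proof that such a reference would supply.
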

From the first inequality of Lemma \ref{weightedlemma2}, if $w\in
A_1$ then there exists a constant $C>0$ so that for any ball $B\subset X$ and $\lambda >1$, we have
$$
w(\lambda B)\leq C\lambda^n w(B).
$$

\section{Weighted BMO spaces associated to operators}
\subsection{Definition of $\BMOA$}

Throughout this paper, we assume that the family of the operators $\{\mathcal{A}_t\}_{t\geq 0}$
satisfies the Gaussian upper bounds (\ref{decaycondition}) and
these operators commute, i.e. $\mathcal{A}_s\mathcal{A}_t=\mathcal{A}_t\mathcal{A}_s$ for all $s, t > 0$.
Note that we do not assume the semigroup property $\mathcal{A}_s\mathcal{A}_t = \mathcal{A}_{s+t}$
on the family $\{\mathcal{A}_t\}_{t\geq 0}$.\\

Following \cite{DY}, we now define the class of functions that
the operators $\{\mathcal{A}_t\}_{t\geq 0}$ act upon. A function $f\in
L^1_{loc}(X)$ is said to be a function of type $(x_0,\beta)$ if $f$
satisfies
\begin{equation}\label{rangeofP_t1}
\Big(\int_X\f{|f(x)|^2}{(1+d(x_0,x))^{\beta}V(x_0,1+d(x_0,x))}\dx\Big)^{1/2}
\leq c <\vc.
\end{equation}
We denote $M_{(x_0,\beta)}$ the collection of all functions of type
$(x_0,\beta)$. If $f\in M_{(x_0,\beta)}$, the norm of $f$ is defined
by
$$
||f||_{M_{x_0,\beta}}=\inf\{c: (\ref{rangeofP_t1}) \ \text{holds}\}.
$$
For a fixed $x_0\in X$, one can check that $M_{(x_0,\beta)}$  is
a Banach space under the norm $||f||_{M_{x_0,\beta}}$. For any
$x_1\in X$, $M_{(x_0,\beta)}=M_{(x_1,\beta)}$ with equivalent norms.
Denote by
$$
\mathcal{M}= \cup_{x_0\in X}\cup_{0<\beta<\vc}M_{(x_0,\beta)}.
$$

\begin{defn}
A function $f\in \mathcal{M}$ is said to be in $BMO_\mathcal{A}(w)$ with
$w\in A_\vc$, the space of functions of bounded mean oscillation
associated to $\{\mathcal{A}_t\}_{t\geq 0}$ and $w$, if there exists some
constant $c$ such that for any ball $B$,
\begin{equation}\label{BMOdef}
\f{1}{w(B)}\int_B|(I-\mathcal{A}_{t_B})f(x)|\dx\leq c,
\end{equation}
where $t_B= r_B^m$ ($m$ is a constant in (\ref{decaycondition})) and $r_B$ is the radius of $B$.\\
The smallest bound $c$ for which (\ref{BMOdef}) is satisfied is then
taken to be the norm of $f$ in this space and is denoted by
$||f||_{BMO_\mathcal{A}(X,w)}$.
\end{defn}

\textbf{Remark:} The space $(\BMOA, \|\cdot\|_{\BMOA})$ is a seminormed vector space, with
the seminorm vanishing on the space $\mathcal{K}_\mathcal{A}$, defined by
$$
\mathcal{K}_\mathcal{A} =\{f\in \mathcal{M}: \mathcal{A}_tf(x)=f(x) \ \text{for almost all $x$ and for all $t>0$}\}.
$$
In this paper, $\BMOA$ space is understood to be modulo $\mathcal{K}_A$.\\

The following result gives a sufficient condition for the BMO$(X,w)$ to be contained in $\BMOA$.
The proof for the unweighted case was given in \cite{Ma} (see also \cite{DY}).

\begin{prop}\label{BMOA and BMO}
Suppose that $w\in A_1$  and $\mathcal{A}_t(1)=1$ for all $t>0$, i.e., $\int_Xa_t(x,y)\dy=1$ for almost all $x\in X$. Then the
inclusion ${\rm BMO}(X,w)\subset \BMOA$ holds where
$$
{\rm BMO}(X,w):=\{f\in L^1_{{\rm loc}}: \|f\|_{{\rm BMO}(X,w)}:=\sup_{B}\f{1}{w(B)}\int_B|f-f_B|d\mu<\vc\}.
$$
\end{prop}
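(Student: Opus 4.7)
The plan is to exploit the hypothesis $\mathcal{A}_t(1)=1$, which is equivalent to $\mathcal{A}_{t_B}c=c$ for any constant $c$. Taking $c=f_B$, I write
\[
(I-\mathcal{A}_{t_B})f(x)=(f(x)-f_B)-\mathcal{A}_{t_B}(f-f_B)(x)
\]
on a ball $B$ with $t_B=r_B^m$, and estimate the two pieces separately. The first piece is immediate from the definition: $\int_B|f-f_B|\dx\le \|f\|_{{\rm BMO}(X,w)}w(B)$, which is precisely the form of the bound required by (\ref{BMOdef}).

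For the second piece I use the annular decomposition $f-f_B=\sum_{j\ge 0}(f-f_B)\chi_{\MoB}$. For $x\in B$ and $y\in\MoB$ with $j\ge 2$, $d(x,y)\gtrsim 2^jr_B=2^jt_B^{1/m}$, and the Gaussian bound (\ref{decaycondition}) combined with (\ref{doublingproperty2}) produces, after absorbing polynomial factors into the exponential, a kernel bound
\[
|a_{t_B}(x,y)|\lesi \f{2^{-jK}}{V(B)}
\]
for any $K>0$; for $j\in\{0,1\}$ one uses the trivial bound $|a_{t_B}(x,y)|\lesi 1/V(B)$. The task thus reduces to controlling the annular averages $\int_{2^jB}|f-f_B|\dx$.

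The natural telescoping gives
\[
\int_{2^jB}|f-f_B|\dx\le \|f\|_{{\rm BMO}(X,w)}w(2^jB)+V(2^jB)\sum_{k=0}^{j-1}|f_{2^{k+1}B}-f_{2^kB}|,
\]
where each step is bounded by $|f_{2^{k+1}B}-f_{2^kB}|\le C\|f\|_{{\rm BMO}(X,w)}w(2^{k+1}B)/V(2^{k+1}B)$. The $A_1$ hypothesis is crucial here: since $\fint_{2^{k+1}B}w$ is dominated by the essential infimum of $w$ on $2^{k+1}B$, and that infimum is in turn bounded by $\fint_{B}w=w(B)/V(B)$, one obtains $w(2^{k+1}B)/V(2^{k+1}B)\lesi w(B)/V(B)$ \emph{uniformly in $k$}. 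Combined with $w(2^jB)\lesi 2^{jn}w(B)$ and $V(2^jB)\lesi 2^{jn}V(B)$, this yields $\int_{2^jB}|f-f_B|\dx\lesi (1+j)\,2^{jn}w(B)\|f\|_{{\rm BMO}(X,w)}$; the series $\sum_j(1+j)2^{-j(K-n)}$ then converges for any $K>n$ and delivers $\int_B|\mathcal{A}_{t_B}(f-f_B)|\dx\lesi w(B)\|f\|_{{\rm BMO}(X,w)}$.

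A preliminary step is to verify that $f\in\mathcal{M}$, so that $\mathcal{A}_{t_B}f$ is meaningful; this follows from the linear growth of the mean oscillations together with doubling of $\mu$, which renders (\ref{rangeofP_t1}) finite for any $\beta>0$. The main obstacle in the argument is the weighted telescoping step: without $w\in A_1$ one only controls $|f_{2^{k+1}B}-f_{2^kB}|$ by the local quantity $w(2^{k+1}B)/V(2^{k+1}B)$ with no uniform bound in $k$, and the series would diverge. The $A_1$ assumption is precisely what replaces these floating local averages by the fixed reference value $w(B)/V(B)$, reducing the $j$-dependence to a tame linear factor that is absorbed by the Gaussian decay of the kernel.
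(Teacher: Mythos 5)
Your proof is correct and proceeds along essentially the same lines as the paper's: both exploit $\mathcal{A}_t(1)=1$ to reduce matters to controlling oscillations of $f$ over annuli, both rely on the Gaussian decay of $a_{t_B}(x,y)$ to sum the annular contributions, and both invoke $w\in A_1$ to replace the floating ratios $w(2^kB)/V(2^kB)$ by the fixed reference $w(B)/V(B)$. The only cosmetic difference is the starting decomposition: you write $(I-\mathcal{A}_{t_B})f=(f-f_B)-\mathcal{A}_{t_B}(f-f_B)$ and then telescope $f_{2^jB}-f_B$ explicitly, whereas the paper writes $f(x)-\mathcal{A}_{t_B}f(x)=\int_X a_{t_B}(x,y)(f(x)-f(y))\,d\mu(y)$ and bounds each annular piece by three oscillation terms directly, absorbing the polynomial growth in $j$ into the superexponential kernel decay without an explicit telescoping sum; the two routes are interchangeable. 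Your added remark that one must check $\mathrm{BMO}(X,w)\subset\mathcal{M}$ so that $\mathcal{A}_{t_B}f$ is defined is a reasonable observation (the paper leaves it implicit), though the justification you give is sketchy since $(\ref{rangeofP_t1})$ involves $|f|^2$ and $\mathrm{BMO}(X,w)$ only gives $L^1_{\mathrm{loc}}$ a priori; a fuller argument would use the John--Nirenberg-type higher integrability.
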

\begin{proof} Let $f\in {\rm BMO}(X,w)$. For any ball $B$, due to $\mathcal{A}_t(1)=1$, we have
\begin{equation*}
\begin{aligned}
\f{1}{w(B)}\int_B&|f(x)-\mathcal{A}_{t_B}f(x)|\dx\\
&=\f{1}{w(B)}\int_B\Big|f(x)-\int_X a_{t_B}(x,y)f(y)\dy\Big|\dx\\
&=\f{1}{w(B)}\int_B\Big|\int_X a_{t_B}(x,y)(f(x)-f(y))\dy\Big|\dx\\
&=\f{1}{w(B)}\int_B\int_X \Big|a_{t_B}(x,y)(f(x)-f(y))\Big|\dy\dx\\
&\leq \f{C}{V(B)w(B)}\int_B\int_X \Big|\exp\Big(-c\f{d(x,y)^{m/(m-1)}}{t_B^{1/(m-1)}}\Big)(f(x)-f(y))\Big|\dy\dx\\
&= \f{C}{V(B)w(B)}\int_B\int_{2B} \ldots \dy\dx+\sum_{j\geq 2}\f{1}{V(B)w(B)}\int_B\int_{S_j(B)} \ldots \dy\dx \\
&=I+\sum_{j\geq 2}I_j.
\end{aligned}
\end{equation*}
Let us estimate $I$ first. We have
\begin{equation*}
\begin{aligned}
I&\lesi \f{1}{V(B)w(B)}\int_B\int_{2B} |f(x)-f_B|\dy\dx+\f{1}{V(B)w(B)}\int_B\int_{2B} |f(y)-f_{2B}|\dy\dx\\
&~~~+\f{1}{V(B)w(B)}\int_B\int_{2B} |f_{2B}-f_{B}|\dy\dx\\
&\lesi \|f\|_{{\rm BMO}(X,w)}.
\end{aligned}
\end{equation*}
For the term $I_j, j\geq 2$, we have
\begin{equation*}
\begin{aligned}
I_j&\lesi \f{1}{V(B)w(B)}\int_B\int_{2^jB} \Big|\exp(-c2^{jm/(m-1)})(f(x)-f(y))\Big|\dy\dx\\
&\lesi \f{\exp(-c2^{jm/(m-1)})}{V(B)w(B)}\Big(\int_B\int_{2^jB} |f(x)-f_B|\dy\dx+\int_B\int_{2^jB} |f(y)-f_{2^jB}|\dy\dx\\
&~~~~~~~+\int_B\int_{2^jB} |f_{2^jB}-f_B|\dy\dx\Big)\\
&\lesi \|f\|_{{\rm BMO}(X,w)}.
\end{aligned}
\end{equation*}
These estimates on $I$ and $I_j, j\geq 2$ give $\|f\|_{\BMOA}\leq \|f\|_{{\rm BMO}(X,w)}$. This completes our proof.
\end{proof}

\begin{prop}\label{pro1}
For $t>0, K>1$ and $w\in A_1$ we have for a.e. $x\in X$
$$|(\mathcal{A}_{t}f(x)- \mathcal{A}_{Kt}f(x))|\lesi (1+\log K)\f{w(B(x,\tm))}{V(x,\tm)} \|f\|_{\BMOA}.$$
\end{prop}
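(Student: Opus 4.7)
The plan is to use a dyadic telescoping argument, reduce each step to the BMO condition via the commutativity of the $\mathcal{A}_t$'s, and finally exploit the $A_1$ monotonicity of $w(B(x,r))/V(x,r)$ in $r$. Let $N = \lceil \log_2 K\rceil$, so $N\lesi 1+\log K$, and set $t_j = 2^j t$ for $j=0,\ldots,N-1$ and $t_N = Kt$; then $1\leq t_{j+1}/t_j\leq 2$ for each $j$, and
$$\mathcal{A}_t f(x) - \mathcal{A}_{Kt}f(x) = \sum_{j=0}^{N-1}\bigl[\mathcal{A}_{t_j}f(x) - \mathcal{A}_{t_{j+1}}f(x)\bigr].$$

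Using the commutativity $\mathcal{A}_{t_j}\mathcal{A}_{t_{j+1}} = \mathcal{A}_{t_{j+1}}\mathcal{A}_{t_j}$, I would rewrite each summand as
$$\mathcal{A}_{t_j} - \mathcal{A}_{t_{j+1}} = \mathcal{A}_{t_j}(I-\mathcal{A}_{t_{j+1}}) - \mathcal{A}_{t_{j+1}}(I-\mathcal{A}_{t_j}),$$
so that each of the two resulting operators acts on something of the form $(I-\mathcal{A}_{t_i})f$, to which the BMO condition \eqref{BMOdef} can be applied. To estimate $|\mathcal{A}_{t_j}(I-\mathcal{A}_{t_{j+1}})f(x)|$ pointwise, I would decompose $X$ into $B(x, t_{j+1}^{1/m})$ and the dyadic annuli $2^kB(x, t_{j+1}^{1/m})\setminus 2^{k-1}B(x, t_{j+1}^{1/m})$ for $k\geq 1$. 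Since $t_{j+1}/t_j\in[1,2]$, the Gaussian bound \eqref{decaycondition} on $a_{t_j}(x,y)$ produces a factor $\exp(-c 2^{km/(m-1)})$ on the $k$-th annulus with a prefactor comparable to $1/V(x, t_{j+1}^{1/m})$ after doubling. I would then cover $2^kB(x, t_{j+1}^{1/m})$ by $O(2^{kn})$ balls of radius $t_{j+1}^{1/m}$ via Lemma \ref{coveringlemm}, apply \eqref{BMOdef} on each, invoke $w(2^kB)\lesi 2^{kn}w(B)$ for $w\in A_1$ (the remark after Lemma \ref{weightedlemma2}), and sum the convergent series $\sum_k 2^{kn}\exp(-c2^{km/(m-1)})$ to obtain
$$|\mathcal{A}_{t_j}(I-\mathcal{A}_{t_{j+1}})f(x)| \lesi \frac{w(B(x, t_{j+1}^{1/m}))}{V(x,t_{j+1}^{1/m})}\,\|f\|_{\BMOA}.$$
A symmetric decomposition in balls of radius $t_j^{1/m}$ handles $|\mathcal{A}_{t_{j+1}}(I-\mathcal{A}_{t_j})f(x)|$, and since $t_{j+1}\in[t_j,2t_j]$ the two bounds are comparable, both of order $\frac{w(B(x,t_j^{1/m}))}{V(x,t_j^{1/m})}\|f\|_{\BMOA}$ after doubling.

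The final step is to sum the $N$ bounds. The key input is the $A_1$ monotonicity
$$\frac{w(B(x,r_2))}{V(x,r_2)} \lesi \frac{w(B(x,r_1))}{V(x,r_1)} \qquad \text{whenever } r_1\leq r_2,$$
which follows from $\fint_{B_2}w\leq C\operatorname{ess\,inf}_{B_2}w\leq C\operatorname{ess\,inf}_{B_1}w\leq C\fint_{B_1}w$ for $B_1\subset B_2$ with $w\in A_1$. Applied with $r_1=\tm$ and $r_2=t_j^{1/m}$, this dominates each of the $N$ summands by $\frac{w(\BBt)}{V(x,\tm)}\|f\|_{\BMOA}$ and produces the asserted factor $N\lesi 1+\log K$.

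The main obstacle is ensuring that no polynomial-in-$K$ loss creeps in: a one-shot estimate of $\mathcal{A}_t(I-\mathcal{A}_{Kt})f(x)$ or $\mathcal{A}_{Kt}(I-\mathcal{A}_t)f(x)$ would produce a factor $K^{n/m}$ through the doubling comparison $V(x,(Kt)^{1/m})\leq CK^{n/m}V(x,\tm)$, which is precisely why the telescoping with successive scales of bounded ratio is essential. Care is also needed because $\frac{w(B(x,r))}{V(x,r)}$ is generally not comparable to $w(x)$ for large $r$, so the $A_1$ monotonicity in $r$ (rather than the cruder pointwise bound by $w(x)$) is the correct tool to close the summation.
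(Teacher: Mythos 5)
Your proof is correct and follows essentially the same route as the paper's: the same algebraic split $\mathcal{A}_{t_j}-\mathcal{A}_{t_{j+1}}=\mathcal{A}_{t_j}(I-\mathcal{A}_{t_{j+1}})-\mathcal{A}_{t_{j+1}}(I-\mathcal{A}_{t_j})$ via commutativity, the same annular decomposition combined with Lemma~\ref{coveringlemm} and the $A_1$ dilation bound $w(2^kB)\lesi 2^{kn}w(B)$, the same dyadic telescoping over $\lesi 1+\log K$ steps of bounded ratio, and the same final use of $A_1$ monotonicity of $w(B(x,r))/V(x,r)$ to collapse all scales onto $r=t^{1/m}$. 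The only difference is cosmetic: the paper first proves the single-step bound for $t\le s\le 2t$ (equation~\eqref{eq1-proofBMOinequality}) and then telescopes, whereas you telescope first and then estimate each summand; your explicit remark about why a one-shot estimate would lose a factor $K^{n/m}$ is a useful observation the paper leaves implicit.
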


Before coming to the proof, we would like to mention that the same estimates as in Proposition \ref{pro1} was obtained in \cite{DY}
under the extra assumption of semigroup property on the family $\{\mathcal{A}_t\}$. While the argument in \cite{DY} relies on Christ's covering lemma,
our argument uses Lemma \ref{coveringlemm}.

\begin{proof} For any $s, t>0$ such that $t\leq s \leq 2t$, we
have
$$
|\mathcal{A}_{t}f(x)- \mathcal{A}_{s}f(x)|\leq |\mathcal{A}_t((I- \mathcal{A}_s)f(x))|+|\mathcal{A}_s((I-\mathcal{A}_t)f(x))|:=I_1+I_2.
$$
We first estimate $I_1$. The Gaussian upper bound (\ref{decaycondition}) of $\mathcal{A}_t$ and the fact that $t\approx s$ gives that
\begin{equation*}
\begin{aligned}
I_1 &\lesi \f{1}{V(x,\tm)}\int_X \exp\Big(-c\f{d(x,y)^{m/(m-1)}}{t^{1/(m-1)}}\Big)|(I- \mathcal{A}_s)f(y)|\dy\\
&\lesi \f{1}{V(x,\sm)}\int_{B(x,\sm)}
\exp\Big(-c\f{d(x,y)^{m/(m-1)}}{s^{1/(m-1)}}\Big)|(I- \mathcal{A}_s)f(y)|\dy\\
&~~~~ +\sum_{j\geq 2}\f{1}{V(x,\sm)}\int_{S_j(B(x,\sm))}
\exp\Big(-c\f{d(x,y)^{m/(m-1)}}{s^{1/(m-1)}}\Big)|(I- \mathcal{A}_s)f(y)|\\
&=I_{11}+\sum_{j\geq 2}I_{1j}.
\end{aligned}
\end{equation*}
For the term $I_{11}$, since $t\approx s$ and $w\in A_1$, we have
\begin{equation*}
\begin{aligned}
I_{11}&\leq \|f\|_{\BMOA}\f{w(B(x,\sm))}{V(x,\sm)}\leq
C\|f\|_{\BMOA}\f{w(B(x,\tm))}{V(x,\tm)}.
\end{aligned}
\end{equation*}

For $j\geq 2$, using Lemma \ref{coveringlemm} we can cover the
annulus $S_j(B(x,\sm))$ by a finite overlapping family of at most
$C2^{jn}$ balls $B(x_k^j, \sm)$. Using $w\in A_1$, we can dominate the term
$I_{1j}$ as follows.
\begin{equation*}
\begin{aligned}
I_{1j}&\lesi \f{1}{V(x,\sm)}\int_{S_j(B(x,\sm))}
\exp\Big(-c\f{d(x,y)^{m/(m-1)}}{s^{1/(m-1)}}\Big)|(I- \mathcal{A}_s)f(y)|\dy\\
&\lesi \f{1}{V(x,\sm)}\int_{S_j(B(x,\sm))}
e^{-c2^{j/(m-1)}}|(I- \mathcal{A}_s)f(y)|\dy\\
&\lesi \sum_{k}\f{1}{V(x,\sm)}\int_{B(x^j_k,\sm)}
e^{-c2^{j/(m-1)}}|(I- \mathcal{A}_s)f(y)|\dy\\
&\lesi \sum_{k}\f{w(B(x^j_k,\sm))}{V(x,\sm)}e^{-c2^{j/(m-1)}}\|f\|_{\BMOA}\\
&\lesi \f{w(B(x,2^j\sm))}{V(x,\sm)}e^{-c2^{j/(m-1)}}\|f\|_{\BMOA}\\
&\lesi 2^{jn}\f{w(B(x,2^j\sm))}{V(x,s^{1/m})}e^{-c2^{j/(m-1)}}\|f\|_{\BMOA}\\
&\lesi 2^{jn}e^{-c2^{j/(m-1)}}\|f\|_{\BMOA}\f{w(x,t^{1/m})}{V(x,t^{1/m})}.
\end{aligned}
\end{equation*}
This implies
$$
I_1\leq C\|f\|_{\BMOA}\f{w(B(x,t^{1/m}))}{V(x,t^{1/m})}.
$$
A similar argument also gives
$$
I_2\leq C\|f\|_{\BMOA}\f{w(B(x,t^{1/m}))}{V(x,t^{1/m})}.
$$
Therefore, we have
\begin{equation}\label{eq1-proofBMOinequality}
|(\mathcal{A}_{t}f(x)-\mathcal{A}_{t+s}f(x))|\lesi \|f\|_{\BMOA}\f{w(B(x,t^{1/m}))}{V(x,t^{1/m})}
\end{equation}
for all $t\leq s\leq 2t$.

In general case,
taking $l\in \mathbb{N}$  such that $2^l\leq K<2^{l+1}$, we can
write
\begin{equation}\label{eq1-pro1}
\begin{aligned}
|(\mathcal{A}_{t}f(x)- \mathcal{A}_{Kt}f(x))|&\leq
\sum_{k=1}^{l}|\mathcal{A}_{2^{l-1}t}f(x)- \mathcal{A}_{2^lt}f(x)|+| \mathcal{A}_{2^lt}f(x)- \mathcal{A}_{Kt}f(x)|\\
&\lesi \sum_{k=1}^{l}\|f\|_{\BMOA}\f{w(B(x,2^{l-1}t^{1/m}))}{V(x,2^{l-1}t^{1/m})}.
\end{aligned}
\end{equation}
Since $w\in A_1$, we have
$$
\f{w(B(x,2^{k}t^{1/m}))}{V(x,2^{k}t^{1/m})}\leq C\f{w(B(x,t^{1/m}))}{V(x,t^{1/m})}
$$
for all $k\geq 0$.

This together with (\ref{eq1-pro1}) gives
$$
|(\mathcal{A}_{t}f(x)-\mathcal{A}_{Kt}f(x))|\lesi (1+\log K)\|f\|_{\BMOA}\f{w(B(x,t^{1/m}))}{V(x,t^{1/m})}.
$$
This completes the proof.
\end{proof}
\subsection{John-Nirenberg inequality on BMO$_\mathcal{A}(X,w)$}

In this section, we will show that functions in the new class of weighted BMO spaces BMO$_\mathcal{A}(X,w)$ satisfy
the John-Nirenberg inequality. The unweighted version was obtained in \cite{DY}.
Here, we extend to the weighted BMO spaces associated to the family of operators $\{\mathcal{A}_t\}_{t>0}$.

\begin{defn}
For $w\in A_1$ and $p\in [1,\vc),$ the function $f\in \mathcal{M}$
is said to be in ${\rm BMO}_\mathcal{A}^p(X,w)$, if there exists some constant $c$
such that for any ball $B$,
\begin{equation}\label{BMOdef1}
\Big(\f{1}{w(B)}\int_B|(I- \mathcal{A}_{t_B})f(x)|^pw^{1-p}(x)\dx\Big)^{1/p}\leq c.
\end{equation}
where $t_B = r_B^m$ and $r_B$ is the radius of $B$.\\
The smallest bound $c$ for which (\ref{BMOdef1}) holds is
then taken to be the norm of $f$ in this space and is denoted by
$||f||_{{\rm BMO}_\mathcal{A}^p(X,w)}$.
\end{defn}

Similar to the classical case, it turns out that the spaces  BMO$_{\mathcal{A}}^p(X,w)$ are equivalent for all $1\leq p < \vc$.
More precisely, we have the following result.

\begin{thm}\label{BMO^pequivalent}
For $w\in A_1$ and $p\in [1,\vc),$ the spaces ${\rm BMO}_\mathcal{A}^p(X,w)$ coincide and their
norms are equivalent.
\end{thm}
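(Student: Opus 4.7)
The plan is to prove the two containments separately. The inclusion ${\rm BMO}_\mathcal{A}^p(X,w)\subset{\rm BMO}_\mathcal{A}^1(X,w)$ is immediate from H\"older's inequality. Writing $g:=(I-\mathcal{A}_{t_B})f$ and factoring $|g|=\bigl(|g|^p w^{1-p}\bigr)^{1/p}\cdot w^{1/p'}$, H\"older with conjugate exponents $p,p'$ yields
\[
\f{1}{w(B)}\int_B|g|\,\dx
\leq\Big(\f{1}{w(B)}\int_B|g|^p w^{1-p}\,\dx\Big)^{1/p}\Big(\f{1}{w(B)}\int_B w\,\dx\Big)^{1/p'}
=\Big(\f{1}{w(B)}\int_B|g|^p w^{1-p}\,\dx\Big)^{1/p},
\]
so taking the supremum over balls $B$ gives $\|f\|_{{\rm BMO}_\mathcal{A}^1(X,w)}\leq\|f\|_{{\rm BMO}_\mathcal{A}^p(X,w)}$ with constant $1$.

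The substantive direction is ${\rm BMO}_\mathcal{A}^1(X,w)\subset{\rm BMO}_\mathcal{A}^p(X,w)$, and the plan is to feed the weighted John--Nirenberg inequality (Lemma~\ref{lem-JNinequality}) into a distributional computation. Normalize so that $\|f\|_{{\rm BMO}_\mathcal{A}(X,w)}=1$ and fix a ball $B$. By Cavalieri,
\[
\int_B|g|^p w^{1-p}\,\dx = p\int_0^{\vc}\lambda^{p-1}\int_{\{x\in B\,:\,|g(x)|>\lambda\}}w^{1-p}(x)\,\dx\,d\lambda.
\]
Since $w\in A_1$, the pointwise bound $w(x)^{-1}\lesi V(B)/w(B)$ holds for $\mu$-a.e.\ $x\in B$, hence $w^{1-p}(x)\lesi\bigl(V(B)/w(B)\bigr)^{p-1}$ uniformly on $B$. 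Inserting this pointwise estimate and then invoking the exponential decay of the distribution function $\mu(\{x\in B:|g(x)|>\lambda\})$ supplied by Lemma~\ref{lem-JNinequality}, the $\lambda$-integration reduces to a Gamma-function evaluation and the whole expression collapses to a constant multiple of $w(B)$. Dividing by $w(B)$ and taking the $p$-th root delivers $\|f\|_{{\rm BMO}_\mathcal{A}^p(X,w)}\lesi\|f\|_{{\rm BMO}_\mathcal{A}^1(X,w)}$.

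The delicate point is balancing two competing factors: the weight $w^{1-p}$ can be pointwise large where $w$ is small (for $p>1$), and this must be absorbed by the exponential decay coming from John--Nirenberg. This is exactly what the $A_1$ hypothesis on $w$ buys, since it converts the possible pointwise blow-up of $w^{1-p}$ on $B$ into the controlled polynomial factor $(V(B)/w(B))^{p-1}$, which the Gamma integral together with the $V(B)$-prefactor naturally produced by the weighted John--Nirenberg inequality can swallow and return $w(B)$. Consequently the genuinely new content of the theorem lies entirely in Lemma~\ref{lem-JNinequality}; once that lemma is available, the equivalence is a short combination of H\"older and distribution-function calculus.
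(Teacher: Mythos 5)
The easy inclusion via H\"older is correct and is exactly what the paper does. The hard inclusion ${\rm BMO}_\mathcal{A}^1(X,w)\subset{\rm BMO}_\mathcal{A}^p(X,w)$, however, has a genuine gap that breaks your argument. Your Cavalieri identity for $|g|$ with respect to the measure $w^{1-p}\,d\mu$ is valid, but the next two steps do not work. First, Lemma~\ref{lem-JNinequality} does \emph{not} supply exponential decay of $\mu\{x\in B:|g(x)|>\lambda\}$: what it controls is $w\{x\in B:|g(x)w^{-1}(x)|>\lambda\}$, and both the level set (you dropped the $w^{-1}$) and the measure (you replaced $w\,d\mu$ by $d\mu$) are wrong. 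The $A_1$ hypothesis gives an \emph{upper} bound on $w^{-1}$ on $B$, which is exactly the wrong direction for transferring a super-level estimate for $|gw^{-1}|$ to one for $|g|$, so the decay you invoke is simply not available. Second, even if one granted you $\mu\{x\in B:|g(x)|>\lambda\}\lesi V(B)e^{-c\lambda}$, the arithmetic does not close: the Gamma integral produces $\lesi V(B)$, and after multiplying by your pointwise bound $\bigl(V(B)/w(B)\bigr)^{p-1}$ and dividing by $w(B)$ you arrive at $\bigl(V(B)/w(B)\bigr)^{p}$, which is not uniformly bounded. So the claim that ``the whole expression collapses to a constant multiple of $w(B)$'' is false under your own assumptions.

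The correct way to exploit Lemma~\ref{lem-JNinequality}, and the route the paper takes, is to factor the integrand as
$$
|g|^p w^{1-p} \;=\; |g\,w^{-1}|^p\, w,
$$
and then to apply the layer-cake formula to the function $|g\,w^{-1}|$ with respect to the \emph{weighted} measure $w\,d\mu$:
$$
\int_B |g\,w^{-1}|^p\, w\,\dx \;=\; p\int_0^\vc \lambda^{p-1}\, w\bigl\{x\in B: |g(x)w^{-1}(x)|>\lambda\bigr\}\,d\lambda.
$$
Now the integrand is precisely the quantity that the weighted John--Nirenberg inequality bounds, namely by $c_1 w(B)\exp(-c_2\lambda/\|f\|_{\BMOA})$, and the Gamma integral returns $c_p\, w(B)\|f\|_{\BMOA}^p$ with no stray $V(B)/w(B)$ powers. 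In particular you do not need, and should not use, the pointwise $A_1$ bound $w^{-1}\lesi V(B)/w(B)$ at this stage: replacing $w^{1-p}$ by its sup throws away the cancellation that makes the argument work. Your instinct that the content is entirely in Lemma~\ref{lem-JNinequality} plus distribution-function calculus is right; you just need to choose the distribution function that the lemma actually controls.
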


Before coming to the proof Theorem \ref{BMO^pequivalent} we need the following result.

\begin{thm}\label{lem-JNinequality}
For $w\in A_1$ and $f\in \BMOA$,  there exist positive constants
$c_1$ and $c_2$ such that for any ball $B$ and $\lambda>0$ we have
\begin{equation}\label{equ1-John-Nirenberg}
w\{x\in B:|(f(x)-A_{t_B}f(x))w^{-1}(x)|>\lambda\}\leq c_1
w(B)\exp\Big(-\f{c_2\lambda}{\|f\|_{\BMOA}}\Big).
\end{equation}
\end{thm}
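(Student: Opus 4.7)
The plan is to adapt the classical Calder\'on--Zygmund stopping-time proof of the John--Nirenberg inequality to the weighted, operator-adapted setting. After rescaling so that $\|f\|_{\BMOA}=1$, I introduce the concentration function
\[
\Phi(\lambda):=\sup_{B}\frac{w\bigl(\bigl\{x\in B:\,|(I-\mathcal{A}_{t_B})f(x)|\,w(x)^{-1}>\lambda\bigr\}\bigr)}{w(B)},
\]
where the supremum is taken over all balls $B\subset X$; the goal is to prove $\Phi(\lambda)\le c_1 e^{-c_2\lambda}$, which is exactly \eqref{equ1-John-Nirenberg}. Chebyshev's inequality applied to the defining BMO bound $\int_B|(I-\mathcal{A}_{t_B})f|\,dx\le w(B)$ furnishes the seed estimate $\Phi(\lambda)\le 1/\lambda$.

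Next I fix $B$, set $h_B:=(I-\mathcal{A}_{t_B})f$, and run a Calder\'on--Zygmund stopping-time decomposition on $B$ relative to the doubling measure $dw$ at a threshold $\lambda_0>1$. Since $w\in A_1\subset A_\infty$ is doubling, Christ's dyadic cube construction together with Lemma~\ref{coveringlemm} produces a pairwise disjoint family of maximal stopping balls $\{B_j\}\subset B$ with: (i) $\lambda_0<\frac{1}{w(B_j)}\int_{B_j}|h_B|\,dx\le C\lambda_0$ (the upper bound from maximality plus $w$-doubling of the dyadic parent); (ii) $|h_B(x)|\,w(x)^{-1}\le\lambda_0$ for $w$-a.e.\ $x\in B\setminus\bigcup_j B_j$ by Lebesgue differentiation with respect to $dw$; and (iii) $\sum_j w(B_j)\le w(B)/\lambda_0$.

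On each $B_j$ I compare $h_B$ with $h_{B_j}:=(I-\mathcal{A}_{t_{B_j}})f$: writing $h_B-h_{B_j}=(\mathcal{A}_{t_{B_j}}-\mathcal{A}_{t_B})f$ and invoking Proposition~\ref{pro1} together with the pointwise $A_1$ bound $w(B(x,r))/V(x,r)\lesssim w(x)$ yields the switching estimate
\[
|h_B(x)-h_{B_j}(x)|\,w(x)^{-1}\;\le\; C(1+\log K_j),\qquad K_j:=t_B/t_{B_j}=(r_B/r_{B_j})^m,
\]
valid for $w$-a.e.\ $x\in B_j$. Consequently $w(\{x\in B_j:|h_B|\,w^{-1}>\mu+C(1+\log K_j)\})\le\Phi(\mu)\,w(B_j)$; summing over $j$ and invoking (iii) produces a recursion for $\Phi$ that I iterate, starting from the Chebyshev seed, to extract exponential decay.

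The main obstacle is the logarithmic factor $\log K_j$: the selected CZ balls may lie arbitrarily deep in the Christ dyadic tree, so $C(1+\log K_j)$ is a priori unbounded. To neutralise it I split the index set into a shallow part $\{j:\log K_j\le L\}$ and a deep part $\{j:\log K_j>L\}$ for a fixed large constant $L$. On the shallow part the switching error is uniformly bounded by $C(1+L)$ and the standard contraction scheme gives a factor $1/\lambda_0$ via (iii). On the deep part the reverse-H\"older bound of Lemma~\ref{weightedlemma2} (applicable since $w\in A_1\subset RH_q$ for some $q>1$) turns the small volume of deep cubes into small $w$-mass, which combined with the geometric decay in (iii) and the trivial bound $\Phi\le\min(1,1/\mu)$ makes the deep contribution negligible once $\lambda_0$ and $L$ are taken sufficiently large. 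The outcome is a contraction $\Phi(\mu+\lambda_1)\le\theta\,\Phi(\mu)$ with $\theta\in(0,1)$ for $\mu\ge\mu_\ast$, which iterates to the required estimate \eqref{equ1-John-Nirenberg}.
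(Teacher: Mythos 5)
Your proposal takes a genuinely different route from the paper's, and the difference is where the trouble lies. The paper first uses the $A_\infty$ property of $w$ to reduce the weighted level--set estimate \eqref{equ1-John-Nirenberg} to its unweighted analogue with $\mu$ in place of $w$; it then runs the Duong--Yan machinery on the function $f_0=[(I-\mathcal{A}_{t_B})f]w^{-1}\chi_{10B}$ via a Whitney decomposition of the open set $\{M(f_0)>\beta\}$ and the weak $(1,1)$ bound for the Hardy--Littlewood maximal function. The decisive point there is that the Whitney balls $B_{1,i}$ see a nearby point $z\in c_0B_{1,i}$ with $M(f_0)(z)\le\beta$, which yields the switching bound $|(\mathcal{A}_{t_{B_{1,i}}}-\mathcal{A}_{t_B})f(x)|w^{-1}(x)\le c_4\beta$: a bound proportional to the working threshold $\beta$, with \emph{no logarithmic loss}. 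That is exactly what makes the $K$-fold iteration close up.

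Your version replaces the Whitney/maximal-function step by a Calder\'on--Zygmund stopping time for the $dw$-averages of $|h_B|w^{-1}$, and you source the switching estimate from Proposition~\ref{pro1}. This gives $|h_B-h_{B_j}|w^{-1}\lesssim(1+\log K_j)\|f\|_{\BMOA}$, i.e.\ a bound proportional to $\|f\|_{\BMOA}=1$ and \emph{not} to the threshold $\lambda_0$, with an unbounded factor $\log K_j$ because the stopping cubes can sit arbitrarily deep in the dyadic tree. Your attempt to neutralise this by a ``shallow vs.\ deep'' split has a genuine gap. Both the shallow and the deep cubes are only controlled through the single Calder\'on--Zygmund inequality $\sum_j w(B_j)\le w(B)/\lambda_0$; the reverse H\"older inequality of Lemma~\ref{weightedlemma2} controls $w(B_j)/w(B)$ in terms of $V(B_j)/V(B)$ for an \emph{individual} deep cube, but it gives no bound on the \emph{aggregate} $\sum_{\text{deep}}w(B_j)$ beyond the same $w(B)/\lambda_0$. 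Consequently the best you can extract is a recursion of the type $\Phi(\mu+C(1+L))\le\lambda_0^{-1}\Phi(\mu)+\lambda_0^{-1}$, and the additive $\lambda_0^{-1}$ term coming from the deep cubes stops the iteration at level $\approx\lambda_0^{-1}$ rather than producing exponential decay. There is no choice of $L$ and $\lambda_0$ that removes this floor, because the depth of the stopping cubes is a priori unconstrained (Lebesgue differentiation forces arbitrarily small stopping cubes around points where $|h_B|w^{-1}$ is large). In short, Proposition~\ref{pro1} alone is too weak to drive the John--Nirenberg iteration; the maximal-function/Whitney step used in the paper (after its $A_\infty$ reduction) is what replaces $(1+\log K_j)\|f\|_{\BMOA}$ with the correct threshold-proportional bound $c_4\beta$, and without that replacement the proof does not close.
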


\begin{proof} Let us recall that if $w\in A_\vc$, there
exist $C>0$ and $\delta>0$ such that for any ball $B$ and any
measurable subset $E\subset B$ we have
$$
\f{w(E)}{w(B)}\leq C\Big(\f{\mu(E)}{\mu(B)}\Big)^{\delta}.
$$
So, to prove (\ref{equ1-John-Nirenberg}), it suffices to show that
\begin{equation}\label{eq1-proofJN}
\mu\{x\in B:|(f(x)-\mathcal{A}_{t_B}f(x))w^{-1}(x)|>\lambda\}\leq c_1
\mu(B)\exp\Big(-\f{c_2\lambda}{\|f\|_{\BMOA}}\Big).
\end{equation}
The proof of (\ref{eq1-proofJN}) is similar to that of Theorem 3.1 in \cite{DY} in which Proposition 2.6 in \cite{DY} is replaced by Proposition \ref{pro1}. However, for reasons of completeness, we sketch out the proof here.

Without the loss of generality, we may assume that $\|f\|_{\BMOA}=1$. Then we need to claim that for all balls $B$, we have
\begin{equation}\label{eq2-proofJN}
\mu\{x\in B:|(f(x)-\mathcal{A}_{t_B}f(x))w^{-1}(x)|>\lambda\}\leq c_1
e^{-c_2\lambda}\mu(B).
\end{equation}
If $\alpha<1$, (\ref{eq2-proofJN}) holds for $c_1=e$ and $c_2=1$. Hence, we consider the case $\alpha\geq 1$.

For any ball $B\subset X$, we set
$$
f_0 = [(f(x)-\mathcal{A}_{t_B}f(x))w^{-1}(x)]\chi_{10B}.
$$
Then, using the fact that $w\in A_1$, we have
$$
\begin{aligned}
\|f_0\|_{L^1}&\leq \int_{10B}|(I-\mathcal{A}_{t_B})f(x)|w^{-1}(x)\dx\\
&\leq \f{V(10B)}{w(10B)}\int_{10B}|(I-\mathcal{A}_{t_B})f(x)|\dx\\
&\lesi V(B)\|f\|_{\BMOA} = V(B).
\end{aligned}
$$
Taking $\beta>1$, we set
$$
F=\{x: M(f_0)(x)\leq \beta\} \ \ \text{and} \ \ \Omega=X\setminus F.
$$
Then we can pick a family of balls $\{B_{1,i}\}_{i=1}^\vc$ so that
\begin{enumerate}[(i)]
\item $\cup_{i}B_{1,i}=\Omega$;
\item there exists $\kappa>0$ so that $\sum_{i}\chi_{B_{1,i}}\leq \kappa$;
\item there exists $c_0$ such that $c_0B_{1,i}\cap F\neq \emptyset$ for all $i$.
\end{enumerate}
See \cite[Chaptier]{CW}.

For $x\in B\setminus [\cup_{i}B_{1,i}]$, by (i), we have
$$
|(I-\mathcal{A}_{t_B})f(x)|w^{-1}(x)=|f_0(x)|\chi_{F}(x)\leq M(f_0)(x)\chi_{F}(x)\leq \beta.
$$
Moreover, from (ii)-(iii) and the fact that the Hardy-Littlewood maximal function $M$ is of weak type $(1,1)$, we have
$$
\begin{aligned}
\sum_{i}\mu(B_{1,i})&\lesi \mu(\Omega)\lesi \f{1}{\beta}\|f_0\|_{L^1}\\
&\leq\f{c_3}{\beta}V(B).
\end{aligned}
$$
By using argument as in \cite[pp. 24-25]{DY}, we can prove that for $B_{1,i}\cap B\neq \emptyset$, we have
$$
|(\mathcal{A}_{t_{B_{1,i}}}-\mathcal{A}_{t_B})f(x)|w^{-1}(x)\leq c_4\beta
$$
for all $x\in B_{1,i}$.

On each $B_{1,i}$, repeat the argument above with the function
$$
f_{1,i}=[(I-\mathcal{A}_{t_{B_{1,i}}})f(x)w^{-1}(x)]\chi_{10B_{1,i}}
$$
and the same value $\beta$. Then we can pick the family of balls $\{B_{2,m}\}_{m=1}^\vc$ such that
\begin{enumerate}[(a)]
\item for any $x\in B_{1,i}\setminus [\cup_{m}B_{2,m}]$, $|(I-\mathcal{A}_{t_{B_{1,i}}})f(x)|w^{-1}(x)\leq \beta$;
\item $\sum_m \mu(B_{2,m})\leq \f{c_3}{\beta}V(B_{1,i})$;
\item for any $B_{2,m}\cap B_{1,i}\neq \emptyset$, $|(\mathcal{A}_{t_{B_{2,m}}}-\mathcal{A}_{t_{B_{1,i}}})f(x)|w^{-1}(x)\leq c_4\beta$, for all $x\in B_{2,m}$.
\end{enumerate}
We now abuse  the notation $\{B_{2,m}\}$ for the family of all families $\{B_{2,m}\}$ corresponding to different $B_{1,i}'s$. Then, for all $x\in B\setminus [\cup_m B_{2,m}]$ we have
$$
|(I-\mathcal{A}_{t_{B}})f(x)|w^{-1}(x)\leq |(I-\mathcal{A}_{t_{B_{1,i}}})f(x)|w^{-1}(x)+|(\mathcal{A}_{t_{B_{1,i}}}-\mathcal{A}_{t_{B}})f(x)|w^{-1}(x)\leq 2c_4\beta
$$
and
$$
\sum_m \mu(B_{2,m})\leq \Big(\f{c_3}{\beta}\Big)^2V(B).
$$
In the consequence, for each $K\in \mathbb{N}_+$ we can find a family of balls $\{B_{K,m}\}_{m=1}^\vc$ so that
$$
|(I-\mathcal{A}_{t_{B}})f(x)|w^{-1}(x)\leq Kc_4\beta \ \ \text{for all} \ \ x\in B\setminus [\cup_m B_{k,m}]
$$
and
$$
\sum_m \mu(B_{K,m})\leq \Big(\f{c_3}{\beta}\Big)^KV(B).
$$
If $Kc_4\beta\leq \alpha\leq (K+1)c_4\beta$ for all $K\in \mathbb{N}_+$, we have
$$
\begin{aligned}
\mu\{x\in B:|(f(x)-\mathcal{A}_{t_B}f(x))|w^{-1}(x)>\lambda\}&\leq \sum_m \mu(B_{K,m})\leq \Big(\f{c_3}{\beta}\Big)^KV(B)\\
&\leq \sqrt{\beta} \exp\Big(-\f{\alpha\log\beta}{4c_4\beta}\Big)V(B)
\end{aligned}
$$
provided $\beta>c_3^2$.

If $\alpha<c_4\beta$, then
$$
\mu\{x\in B:|(f(x)-\mathcal{A}_{t_B}f(x))|w^{-1}(x)>\lambda\}\lesi e^{-\f{\alpha}{c_4\beta}}V(B).
$$
Hence, this completes our proof.
\end{proof}

\emph{Proof of Theorem \ref{BMO^pequivalent}:} For $f\in {\rm BMO}_\mathcal{A}^p(X,w)$, using H\"older's
inequality, we have, for all balls $B$,
\begin{equation}
\begin{aligned}
\f{1}{w(B)}\int_B&|(I- \mathcal{A}_{t_B})f(x)|\dx\\
&\leq \f{1}{w(B)}\Big(\int_B|(I-\mathcal{A}_{t_B})f(x)|^pw^{1-p}(x)\dx\Big)^{1/p}\Big(\f{1}{w(B)}\int_Bw(x)d\mu(x)\Big)^{1/p'}\\
&\leq \|f\|_{{\rm BMO}_\mathcal{A}^p(X,w)}.
\end{aligned}
\end{equation}
This implies that ${\rm BMO}_\mathcal{A}^p(X,w) \subset \BMOA$.

Conversely, by Lemma \ref{lem-JNinequality}, we have for any $f\in
\BMOA$, the ball $B$ and $p\in [1, \vc)$,
\begin{equation*}
\begin{aligned}
\f{1}{w(B)}&\int_B|(I-\mathcal{A}_{t_B})f(x)|^pw^{1-p}(x)\dx\\
&=\f{1}{w(B)}\int_B|(I-\mathcal{A}_{t_B})f(x)w^{-1}(x)|^pw(x)\dx\\
&=\f{p}{w(B)}\int_0^\vc \lambda^{p-1}w\{x\in B:
|(I-\mathcal{A}_{t_B})f(x)w^{-1}(x)|>\lambda\}d\lambda\\
&\leq c_p\f{1}{w(B)}\int_0^\vc \lambda^{p-1}w(B)\exp\Big(-c_2\f{\lambda}{\|f\|_{\BMOA}}\Big)d\lambda\\
&\leq c_p\|f\|_{\BMOA}^p.
\end{aligned}
\end{equation*}
The proof is complete.
\begin{flushright}
    $\Box$
\end{flushright}

\section{An Interpolation Theorem}

In this section, we study the interpolation of the weighted BMO
space $\BMOA$ in general setting of spaces of homogeneous type.
Firstly, We review the concept of the sharp maximal operator
$M_\mathcal{A}^\sharp $ associated to the family $\{\mathcal{A}_t\}_{t>0}$ defined on $L^p(X), p>0$ as well as its basic
properties in \cite{Ma},
$$
M_\mathcal{A}^\sharp f(x)=\sup_{x\in
B}\Big(\f{1}{\mu(B)}\int_B|(I- \mathcal{A}_{t_B})f(x)|\dx\Big),
$$
where $t_B=r^m_B$.\\
We recall the following results in \cite{Ma}.

\begin{thm}\label{Ma2} Let $0<p<\vc$ and $w\in A_\vc$. For every $f \in L^1_0(X)$ with $Mf \in L^p_w(X)$, we have
\begin{enumerate}[(i)]
\item $M^\sharp_\mathcal{A}f(x) \leq  CMf(x)$.
\item $||Mf||_{L^p_w(X)} \leq  C ||M_ \mathcal{A}^\sharp f||_{L^p_w(X)}$ if $\mu(X)=\vc$.
\item $||Mf||_{L^p_w(X)} \leq  C (||M_\mathcal{A}^\sharp f||_{L^p_w(X)}+||f||_{L^1})$ if $\mu(X)<\vc$.
\end{enumerate}
\end{thm}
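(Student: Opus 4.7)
The plan is to first establish (i) by direct kernel estimates and then derive (ii)--(iii) via a weighted good-$\lambda$ inequality adapted to the operator-dependent sharp maximal function.

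For (i), fix a ball $B$ with radius $r_B$ and a point $x\in B$. Writing $(I-\mathcal{A}_{t_B})f = f - \mathcal{A}_{t_B}f$, I would split the integral average over $B$ into two pieces. The contribution of $f$ is bounded by $Mf(x)$ immediately since $x\in B$. For $\mathcal{A}_{t_B}f(y)$ with $y\in B$, decompose $X=\bigcup_{j\geq 0}S_j(B)$ and use the Gaussian kernel bound together with $V(y,t_B^{1/m})\sim V(B)$ to get
\begin{equation*}
|\mathcal{A}_{t_B}f(y)|\lesi \sum_{j\geq 0}e^{-c2^{jm/(m-1)}}\f{V(2^jB)}{V(B)}\f{1}{V(2^jB)}\int_{2^jB}|f(z)|\dz \lesi Mf(x),
\end{equation*}
since $V(2^jB)/V(B)\lesi 2^{jn}$ by doubling and $B\subset B(x,2r_B)$. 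Taking the supremum over $B\ni x$ yields (i).

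For (ii) and (iii), I would follow the Fefferman--Stein good-$\lambda$ scheme. The key technical estimate to establish is
\begin{equation*}
w\{x\in X: Mf(x)>2\lambda,\ M^\sharp_\mathcal{A} f(x)\leq \gamma\lambda\} \leq C\gamma^{\delta}w\{x\in X: Mf(x)>\lambda\}
\end{equation*}
for some $\delta>0$ depending on the $A_\infty$ constant of $w$ and all sufficiently small $\gamma>0$. I would obtain this by a Whitney-type stopping-time decomposition of $\Omega_\lambda=\{Mf>\lambda\}$ into balls $\{B_k\}$. On each $B_k$ intersecting $\{M^\sharp_\mathcal{A} f\leq \gamma\lambda\}$, the averaged estimate defining $M^\sharp_\mathcal{A} f$ gives ball-average control of $(I-\mathcal{A}_{t_{B_k}})f$; combining with the pointwise bound on $\mathcal{A}_{t_{B_k}}f$ from (i) (evaluated at a Whitney escape point in $cB_k$ where $Mf\leq\lambda$) produces $\mu\{y\in B_k: Mf(y)>2\lambda\}\lesi \gamma\,\mu(B_k)$. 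The $A_\infty$ property of $w$ then upgrades this Lebesgue-type bound to the weighted version with exponent $\delta$. Integrating the good-$\lambda$ inequality against $p\lambda^{p-1}d\lambda$ and absorbing $\|Mf\|_{L^p_w}$ (finite by hypothesis) to the left yields (ii).

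In the finite-measure setting (iii), the decomposition of $\{Mf>\lambda\}$ fails once $\lambda$ drops below $\lambda_0:=\|f\|_{L^1}/\mu(X)$, since the level set becomes all of $X$. I would therefore split the integral $\int_X(Mf)^pw\,d\mu$ into contributions from $\{Mf>\lambda_0\}$, handled by the good-$\lambda$ argument as before, and $\{Mf\leq \lambda_0\}$, dominated trivially by $\lambda_0^p\,w(X)\lesi \|f\|_{L^1}^p$, which accounts for the additive $\|f\|_{L^1}$ term. The main obstacle is reconciling the classical sharp-function mechanism, which pivots on $f_B$, with $M^\sharp_\mathcal{A} f$, which only controls $(I-\mathcal{A}_{t_B})f$; the kernel analysis behind (i) is precisely what bridges this gap by furnishing an explicit pointwise bound on $\mathcal{A}_{t_B}f$ in terms of $Mf$ that matches against the Whitney escape condition driving the good-$\lambda$ argument.
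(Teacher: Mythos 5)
The paper does not prove Theorem \ref{Ma2}; it is quoted verbatim from Martell's paper \cite{Ma} with only the reference. So there is no ``paper's own proof'' to compare against here. Assessed on its own terms, your outline reconstructs the standard argument and is essentially correct: part (i) via Gaussian decay, doubling, and an annular decomposition, and parts (ii)--(iii) via a weighted good-$\lambda$ inequality with a Whitney decomposition of $\{Mf>\lambda\}$, using the pointwise bound on $\mathcal{A}_{t_B}f$ from (i) in place of the constant $f_B$ of the classical Fefferman--Stein mechanism. This is indeed the approach taken in \cite{Ma} and in Duong--Yan \cite{DY}, so you have matched the intended route.

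Two details deserve tightening. First, the good-$\lambda$ threshold cannot generally be taken to be $2\lambda$ in the homogeneous-type setting: the replacement term $\mathcal{A}_{t_{cB_k}}f$ is only bounded by $C\,Mf(\bar x)\leq C\lambda$ with an unspecified constant $C$ (coming from the kernel and doubling constants), and similarly the ``escape'' bound on large balls carries such a constant. So the correct claim is $w\{Mf>A\lambda,\ M^\sharp_{\mathcal{A}}f\leq\gamma\lambda\}\leq C\gamma^{\delta}w\{Mf>\lambda\}$ for a suitably large $A$; this still integrates to (ii) after the usual change of variables and absorption, which is legitimate precisely because $\|Mf\|_{L^p_w}<\infty$ is assumed. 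Second, in the finite-measure case (iii), the bound $\lambda_0^p\,w(X)\lesssim\|f\|_{L^1}^p$ with $\lambda_0=\|f\|_{L^1}/\mu(X)$ presupposes a normalization or comparison of $w(X)$ with $\mu(X)^p$, which does not hold for a general $w\in A_\infty$; the clean way is to observe that for $\lambda\leq\lambda_0$ the trivial estimate $w\{Mf>\lambda\}\leq w(X)$ gives $\int_0^{\lambda_0}\lambda^{p-1}w\{Mf>\lambda\}\,d\lambda\leq \lambda_0^p\,w(X)/p$, and then the stated conclusion only asserts $\|Mf\|_{L^p_w}\lesssim\|M^\sharp_{\mathcal A}f\|_{L^p_w}+\|f\|_{L^1}$ with a constant allowed to depend on $w$ and $\mu(X)$, so this term is admissible. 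Neither point is a genuine gap in the idea, but both need to be stated carefully to make the sketch into a complete proof.
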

In what follows, the operator $T$ is said to be bounded from
$wL^\vc$ to $BMO_\mathcal{A}(X,w)$ if there exists $c$ such that for all $f\in
L^\vc(X)$,
$$
\|T(fw)\|_{\BMOA}\lesi\|f\|_{L^\vc}.
$$

We recall an interpolation theorem for
the classical weighted BMO in \cite{B}.
\begin{thm}\label{thm-B}
Let $T$ be a linear operator which is bounded on $L^2(\mathbb{R}^n)$. Assume that $T$ and $T^*$ are bounded from $wL^\vc$ to ${\rm BMO}(X,w)$ for all $w\in A_1$. Then $T$ is bounded on $L^p_w(\mathbb{R}^n)$ for all $1<p<\vc$ and $w\in A_p$.
\end{thm}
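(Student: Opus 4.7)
The plan is to combine the unweighted conclusions forced by the hypothesis at the trivial weight $w \equiv 1$ with a pointwise sharp-function estimate derived from the full weighted hypothesis, and then sweep to the entire weighted range by Rubio de Francia extrapolation. First, taking $w \equiv 1 \in A_1$ in the hypothesis yields $T, T^* : L^\vc(\mathbb{R}^n) \to {\rm BMO}(\mathbb{R}^n)$; combined with the $L^2$ boundedness and the classical Fefferman--Stein interpolation argument (real interpolation between $L^2$ and the $(L^\vc, {\rm BMO})$ endpoint, then dualising the same bound for $T^*$ to cover $1 < p \leq 2$), this gives $T$ bounded on $L^p(\mathbb{R}^n)$ for every $1 < p < \vc$. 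This is the unweighted backbone that feeds into the local piece of the sharp-function estimate in the next step.

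Next, I would establish a pointwise estimate of the form
\[
M^\sharp(Tf)(x) \lesi \bigl(M|f|^s(x)\bigr)^{1/s}
\]
for some fixed $s > 1$ and all bounded, compactly supported $f$. Fix a ball $B = B(x_0, r)$ and decompose $f = f\chi_{2B} + f\chi_{(2B)^c} =: f_1 + f_2$. The local piece is handled by Kolmogorov's inequality and the unweighted $L^s$ bound from Step~1: $\f{1}{|B|}\int_B |Tf_1|\,dx \lesi (M|f|^s(x_0))^{1/s}$. For the non-local piece $f_2$, the weighted endpoint hypothesis is the active ingredient: using the Coifman--Rochberg construction, select a weight $w_f = (M|f|)^\delta \in A_1$ with $\delta \in (0,1)$ small and $A_1$ constant independent of $f$, write $f_2 = (f_2 / w_f)\, w_f$, and apply $\|T(gw_f)\|_{{\rm BMO}(X,w_f)} \lesi \|g\|_\vc$ to $g = f_2 / w_f$. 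Since $g$ is pointwise controlled by a power of $M|f|$ at $x_0$, this converts the BMO oscillation of $Tf_2$ on $B$ into a constant multiple of $M|f|(x_0)$, which is in turn dominated by $(M|f|^s(x_0))^{1/s}$.

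With the sharp-function inequality in hand, the classical Fefferman--Stein inequality $\|g\|_{L^{p_0}_w} \lesi \|M^\sharp g\|_{L^{p_0}_w}$, valid for $w \in A_\vc$, together with the Muckenhoupt bound for the Hardy--Littlewood maximal function, yields $T$ bounded on $L^{p_0}_w$ for some single $p_0 > s$ and all $w \in A_{p_0/s}$. A single application of Rubio de Francia extrapolation then promotes this one weighted bound to $T$ bounded on $L^p_w$ for every $1 < p < \vc$ and every $w \in A_p$. The main obstacle is the pointwise sharp-function estimate: the hypothesis only delivers ${\rm BMO}(X,w)$ control of $T(gw)$ when $g \in L^\vc$, so the weight $w_f$ must be engineered, via a Coifman--Rochberg type construction tied to $Mf$, to lie simultaneously in $A_1$ with uniform constants and to render $f/w_f$ essentially bounded on the relevant region; balancing these two requirements, and invoking the symmetric hypothesis on $T^*$ to handle the low-$p$ range (either directly or through the extrapolation), is where the delicate work lies.
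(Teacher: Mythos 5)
The paper itself does not prove Theorem~\ref{thm-B}; it simply cites Bloom~\cite{B}, and what it does prove is the generalisation Theorem~\ref{interpolation1}, whose argument (adapted from~\cite{B}) is the one you should be comparing against. That argument never attempts a pointwise sharp-function bound. Instead, it reads the hypothesis as boundedness of the operator $w^{-1}M_{\mathcal A}^\sharp\big(T^*(w\,\cdot)\big)$ on $L^\infty$ for each fixed $w\in A_1$, interpolates this against the $L^2$ bound by Stein--Weiss interpolation with change of measure (via~\cite{BT}) to obtain $M^\sharp_{\mathcal A,T^*}\colon L^q(w^{2-q})\to L^q(w^{2-q})$, transfers this to $T^*$ via the Fefferman--Stein inequality of Theorem~\ref{Ma2}, dualises, iterates the same scheme with a second $A_1$ weight, and finally reaches arbitrary $A_p$ weights by Jones factorisation together with the extrapolation theorem of~\cite{AM}.

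Your Step~1 (unweighted $L^p$ bounds from the $w\equiv 1$ case) and Step~3 (Fefferman--Stein plus extrapolation) are sound, but Step~2 fails. The weighted BMO hypothesis gives, for every $w\in A_1$, every $g\in L^\infty$ and every ball $B$,
\begin{equation*}
\frac{1}{|B|}\int_B\big|T(gw)-(T(gw))_B\big|\,dx\ \lesi\ \frac{w(B)}{|B|}\,\|g\|_{L^\infty}.
\end{equation*}
The right-hand side is proportional to the \emph{global} norm $\|g\|_{L^\infty}$, not to the behaviour of $g$ near $x_0$. With $w_f=(M|f|)^\delta$ and $g=f_2/w_f$ you have $|g|\le (M|f|)^{1-\delta}$ pointwise, so $\|g\|_{L^\infty}\sim\|f\|_{L^\infty}^{1-\delta}$, and all you can extract for $B\ni x_0$ is
\begin{equation*}
\frac{1}{|B|}\int_B\big|Tf_2-(Tf_2)_B\big|\,dx\ \lesi\ \big(M|f|(x_0)\big)^{\delta}\,\|f\|_{L^\infty}^{1-\delta},
\end{equation*}
which is not $\lesi\big(M|f|^s(x_0)\big)^{1/s}$. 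The sentence ``since $g$ is pointwise controlled by a power of $M|f|$ at $x_0$'' conflates the value of $g$ near $x_0$ with $\|g\|_{L^\infty}$; only the latter enters the hypothesis, and it is not localised. Absent any assumed decay of the kernel of $T$ away from the diagonal, there is no way to engineer a single $A_1$ weight that simultaneously makes $f_2/w_f$ uniformly bounded with norm $\lesi M|f|(x_0)$ and has $A_1$ constant independent of $f$ and $x_0$; these requirements pull in opposite directions. This is precisely why Bloom's proof, and the paper's, interpolate at the level of operators on weighted $L^p$ spaces rather than trying to produce a Coifman--Fefferman pointwise inequality.
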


It is interesting that our weighted
$\BMOA$ can be considered as a good substitution the classical
weighted BMO in the sense of interpolation. By adapting the arguments in \cite{B} to our situation, we will establish an interpolation theorem
concerning the our weighted BMO spaces $\BMOA$ which generalizes Theorem \ref{thm-B} to the range of weights and to the weighted BMO spaces associated to the family $\{\mathcal{A}_t\}_{t>0}$.

\begin{thm}\label{interpolation1}
Assume that $T$ is a linear operator which is bounded on $L^2(X)$. Assume also that $T$
is bounded from $wL^\vc$ to ${\rm BMO}_\mathcal{A}(X,w)$ and $T^*$ is bounded from $wL^\vc$ to
${\rm BMO}_{\mathcal{A}^*}(X,w)$ for all $w\in A_1\cap RH_s$ with $1\leq s<\vc$. Then $T$ is
bounded on $L^p_w(X)$ for all $s<p<\vc$, $w\in A_{p/s}$.
\end{thm}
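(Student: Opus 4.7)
The plan is to mirror the proof of Theorem \ref{thm-B} from \cite{B}, with two adaptations: replace the classical sharp maximal function by $M_{\mathcal{A}}^{\sharp}$, using Theorem \ref{Ma2} in place of the Fefferman--Stein inequality, and work at the $L^s$-level of the Hardy--Littlewood maximal function so that the reverse H\"older hypothesis is engaged. Concretely, the target is the pointwise inequality
$$
M_{\mathcal{A}}^{\sharp}(Tf)(x) \;\lesssim\; \bigl(M(|f|^s)\bigr)^{1/s}(x), \qquad x \in X,
$$
for all bounded, compactly supported $f$. Once this is in hand, the theorem is immediate: Theorem \ref{Ma2} gives $\|Tf\|_{L^p_w}\leq \|M(Tf)\|_{L^p_w}\lesssim \|M_{\mathcal{A}}^{\sharp}(Tf)\|_{L^p_w}$ for $w \in A_\infty$, and the right-hand side is controlled by $\|(M|f|^s)^{1/s}\|_{L^p_w}\lesssim \|f\|_{L^p_w}$ precisely when $w\in A_{p/s}$, since the latter is the $A_{p/s}$-weighted $L^{p/s}$-boundedness of $M$.

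To prove the pointwise bound, fix $x\in X$ and a ball $B\ni x$ with $t_B=r_B^m$, and split $f=f_1+f_2$ with $f_1=f\chi_{4B}$. For the local piece, bound
$$
\frac{1}{V(B)}\int_B |(I-\mathcal{A}_{t_B})Tf_1|\,d\mu \;\lesssim\; \Big(\frac{1}{V(B)}\int_B |Tf_1|^2\,d\mu\Big)^{1/2} + \Big(\frac{1}{V(B)}\int_B |\mathcal{A}_{t_B}Tf_1|^2\,d\mu\Big)^{1/2},
$$
and use the $L^2$-boundedness of $T$ together with the Gaussian estimate \eqref{decaycondition} on $\mathcal{A}_{t_B}$ to reduce both terms to $(M|f|^s)^{1/s}(x)$; the annular decomposition $\cup_j S_j(4B)$ and the Gaussian off-diagonal decay absorb the mismatch when $s>2$. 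For the non-local piece $Tf_2$, the key idea is to introduce an $A_1\cap RH_s$ weight $v_B$ adapted to the ball $B$, of the form $v_B\sim \chi_{4B}+\sum_{j\geq 2} 2^{-j\alpha}\chi_{S_j(B)}$ for $\alpha$ large enough to enforce $v_B\in A_1\cap RH_s$ uniformly in $B$, and to apply the hypothesis $T : v_B L^\infty \to \mathrm{BMO}_{\mathcal{A}}(X,v_B)$ to the decomposition $f_2=(f_2/v_B)v_B$. This yields
$$
\frac{1}{V(B)}\int_B |(I-\mathcal{A}_{t_B})Tf_2|\,d\mu \;\lesssim\; \frac{v_B(B)}{V(B)}\,\Big\|\frac{f_2}{v_B}\Big\|_{L^\infty},
$$
and balancing the geometric factors $v_B(B)/V(B)$ against the off-diagonal behavior of $v_B$ across annuli $S_j(B)$ collapses the right-hand side to a multiple of $(M|f|^s)^{1/s}(x)$. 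The adjoint hypothesis on $T^*$ enters when the $(I-\mathcal{A}_{t_B})$ factor is unfolded by duality, $\int_B h\cdot (I-\mathcal{A}_{t_B})Tf_2\,d\mu=\int f_2\cdot T^*(I-\mathcal{A}^*_{t_B})h\,d\mu$, to handle the contributions in which $\mathcal{A}_{t_B}$ acts on $Tf_2$ rather than on $f_2$; here the commutativity of $\{\mathcal{A}_t\}$ and the symmetric Gaussian bound on $\mathcal{A}^*_t$ are used in the same way as for $\mathcal{A}_t$.

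The principal obstacle will be the construction and verification of the adapted weight $v_B$: producing a family $\{v_B\}$ in $A_1\cap RH_s$ with constants independent of $B$ in a general space of homogeneous type, and checking that its annular decay is sharp enough to yield $M_s f(x)$ upon summation. The reverse H\"older index $s$ enters exactly at this juncture and pins down the range $w\in A_{p/s}$, so this step is the heart of the argument; by comparison the upgrade from the pointwise estimate to the $L^p_w$-bound, and the dualization that brings in $T^*$, are routine once the sharp-maximal inequality is established.
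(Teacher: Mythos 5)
Your plan hinges on establishing the pointwise bound $M_{\mathcal{A}}^{\sharp}(Tf)(x)\lesssim \bigl(M(|f|^s)\bigr)^{1/s}(x)$, and this is where the argument breaks down: such a pointwise estimate does not follow from the hypotheses of the theorem, which are purely abstract ($L^2$-boundedness plus $wL^\infty\to \mathrm{BMO}_{\mathcal{A}}(X,w)$ boundedness for a range of weights). Pointwise sharp-maximal domination of the type you describe is what one proves for \emph{concrete} operators using kernel conditions such as (H1) or (H2) in Section 5; it is not extractable from endpoint boundedness alone. Concretely, your non-local estimate produces a factor $\|f_2/v_B\|_{L^\infty}$. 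Even if a uniform family $\{v_B\}\subset A_1\cap RH_s$ with the required annular profile could be built, this quantity is a weighted $L^\infty$ norm of $f$, not an average: there is no mechanism that turns it into $(M|f|^s)^{1/s}(x)$. For a general $f\in L^p_w$ (not in $L^\infty$) the right-hand side is simply infinite, so the claimed pointwise inequality cannot hold. A secondary difficulty is that the annular weight $v_B\sim\chi_{4B}+\sum_{j\ge 2}2^{-j\alpha}\chi_{S_j(B)}$ decays geometrically in $j$; its membership in $A_1$ (and then in $RH_s$) with constants uniform in $B$ imposes competing constraints on $\alpha$ that are not verified, and on a general space of homogeneous type they may be incompatible with the decay needed to sum the annuli.

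The paper takes a structurally different route that avoids any pointwise estimate. It observes that $w^{-1}M^\sharp_{\mathcal{A}}(T^*(w\cdot))$ is bounded on $L^\infty$ (this is exactly the $\mathrm{BMO}_{\mathcal{A}^*}$ hypothesis rewritten), that $M^\sharp_{\mathcal{A}}\circ T^*$ is bounded on $L^2$ (Theorem \ref{Ma2} plus $L^2$-boundedness), and then applies Stein--Weiss interpolation with change of measure to get $M^\sharp_{\mathcal{A}}\circ T^*:L^q(w^{2-q})\to L^q(w^{2-q})$, hence $T^*$ itself is bounded there by Theorem \ref{Ma2}. Dualizing gives weighted bounds for $T$; repeating with the roles of $T$ and $T^*$ exchanged and interpolating in the two-weight parameter, then invoking Jones factorization and the extrapolation theorem of Auscher--Martell (\cite[Thm.\ 4.9]{AM}), yields the full range $w\in A_{p/s}$, $s<p<\infty$. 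The key point is that the family of weights in the hypothesis is used \emph{globally} through interpolation and extrapolation, rather than through a ball-by-ball adapted weight; this is what lets the argument go through with no kernel information whatsoever.
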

\begin{proof} For the sake of simplicity we assume that
$\mu(X)=\vc$, the case that $\mu(X) < \infty$ can be treated in the same
way. When $w\equiv 1$, the operator $T$ is bounded from $L^\vc$ to
$\BMOA$. Due to \cite[Theorem 5.2]{DY}, $T$ is bounded on $L^p(X)$
for $p\in (2,\vc)$. By duality, one gets that $T$ is bounded on
$L^p(X)$ for $p\in (1,\vc)$.

Now for $w\in A_1\cap RH_s$ and $f\in L^\vc(X)$, we have
\begin{equation*}
\begin{aligned}
w^{-1}(x)M_\mathcal{A}^\sharp(T^*(wf))(x)&=\sup_{B\ni
x}\f{1}{V(B)}\int_B|(I- \mathcal{A}_{t_B})T^*(wf)(y)|\dy w^{-1}(x)\\
&\leq \sup_{B\ni
x}\f{1}{w(B)}\int_B|(I-\mathcal{A}_{t_B})T^*(wf)(y)|\dy \\
&\leq c\|T^*(wf)\|_{\BMOA}\leq c\|f\|_{L^\vc}
\end{aligned}
\end{equation*}
for all $x\in X$. This implies that the operator
$w^{-1}M^\sharp_{\mathcal{A},T^*w}$ is bounded on $L^\vc(X)$, where
$M^\sharp_{\mathcal{A},T^*w}$ is defined by $M^\sharp_{\mathcal{A},T^*w}f= M^\sharp_\mathcal{A} (T^*(wf))$.
On the other hand due to Proposition \ref{Ma2} and the
$L^2$-boundedness of $T^*$, $M^\sharp_{\mathcal{A},T^*}$ is bounded on
$L^2(X)$. This together with the interpolation, see for example \cite{BT}, gives
$$
u^{2/p-1} M^\sharp_\mathcal{A}(T^* u^{1-2/q}): L^q \rightarrow L^q,
$$
where $\f{1}{p}+\f{1}{q}=1$.

This implies
$$
M^\sharp_{\mathcal{A},T^*}: L^q(w^{2-q}) \rightarrow L^q(w^{2-q}).
$$
Using Theorem \ref{Ma2}, we have
$$
T^*: L^q(w^{2-q}) \rightarrow L^q(w^{2-q}).
$$
Let $g\in L^q(w^{2-q})$ and $f\in L^p(w^{2-p})$. We have
$$
\int_X|(Tf)g|d\mu=\int_X |fw^{1-2/q}
(T^*g)w^{2/q-1}|d\mu\leq\|T^*g\|_{L^q(w^{2-q})}\|f\|_{L^p(w^{2-p})}.
$$
By duality, $T: L^p(w^{2-p}) \rightarrow L^p(w^{2-p})$, or,
$w^{2/p-1}Tw^{1-2/p}: L^p\rightarrow L^p$.

On the other hand, for $f\in L^p$ and $g\in L^q$, we have
$$
\int_X |T(fw^{2/q -1})w^{1-2/q}g|d\mu=\int_X |f\times w^{2/q
-1}T^*(w^{1-2/q}g)|d\mu\leq c\|f\|_{L^p}\|g\|_{L^q},
$$
and hence $w^{1-2/q}Tw^{2/q-1}: L^p\rightarrow L^p$.

Since we can interchange $T$ and $T^*$, we can show that for
$\f{1}{p}+\f{1}{q}=1$, $p$ near 1, and $w, v\in A_1$,
$$
w^{1-2/q}Tw^{2/q-1}: L^p\rightarrow L^p \ \text{and} \
v^{2/q-1}Tv^{1-2/q}: L^q\rightarrow L^q.
$$
By interpolation, we obtain
$$
w^{\alpha(t)}v^{\beta(t)}T(w^{-\alpha(t)}v^{-\beta(t)}): L^{1/t}
\rightarrow L^{1/t} \ \text{for} \ \f{1}{q}\leq t\leq \f{1}{p}
$$
for all $v, w\in A_1\cap RH_s$, where $\alpha(t)=t-\f{1}{q}$ and $\beta(t)=t-\f{1}{p}$.

This gives $T:L^{p_0}(u) \rightarrow L^{p_0}(u)$ whenever
\begin{equation}\label{eq1-interthm}
u=w^{p_0\alpha(1/p_0)}v^{p_0\beta(1/p_0)}, \ \ w,v\in A_1\cap RH_s \ \text{and $p<p_0<q$}.
\end{equation}
Take $p_0=(q+s)-qs/p$ and $r_0=\f{pq}{q-p}$. For any $u\in A_{p_0/s}$, by Jones Factorization, there exist $u_1, u_2\in
A_1$ such that $u=u_1u_2^{1-p_0/s}$, see \cite{J}. Setting $u_1=w_1^s$ and $u_2=w_2^s$, then $w_1, w_2\in A_1\cap RH_s$.  Hence, we can pick $\delta>0$ so that $u_1^{1+\delta},u_2^{1+\delta}\in A_1$. For $p$ close enough to $1$, $r_0<1+\delta$ and hence $u_1^{r_0}=w_1^{r_0s}, u_2^{r_0}=w_2^{r_0s}\in A_1$. This implies $w_1^{r_0}, w_2^{r_0}\in A_1\cap RH_s$. Due to (\ref{eq1-interthm}), $T$ is bounded on $L^{p_0}(v)$, here $v=(w_1^{r_0})^{p_0\alpha(1/p_0)}(w_2^{r_0})^{p_0\beta(1/p_0)}=w^s_1w_2^{s(1-p_0)}=u$. Applying Theorem 4.9 in \cite{AM}, $T$  is bounded on $L^p_w(X)$ for all $s<p<\vc$ and $w\in A_{p/s}$. This completes our proof.
\end{proof}

\section{Applications to boundedness of singular integrals}

Let $X$ be a space of homogeneous type $(X,
d,\mu)$. Let $T$ be a bounded linear operator from $L^2(X)$ to $L^2(X)$ with kernel $k$ such
that for every $f$ in $L^\vc(X)$ with bounded support,
$$
Tf(x)=\int_{X}k(x,y)f(y)\dy,
$$
for $\mu$-almost all $x \notin suppf$. We will consider the following conditions:

\textbf{(H1)} There exists a class of approximation to the identity
$\{\mathcal{A}_t\}_{t>0}$ satisfying (\ref{decaycondition}) such that the
operators $(T - \mathcal{A}_tT)$ and $(T - T\mathcal{A}_t)$ have associated kernels
$K^1_t (x, y)$ and $K^2_t (x, y)$ respectively and there exist
positive constants $\alpha$ and $c_1, c_2$ such that
$$
\max \{|K^2_t(x,y)|, |K^1_t(x,y)| \} \leq
c_2\f{1}{V(x,d(x,y))}\f{t^{\alpha/m}}{d(x,y)^\alpha}
$$
when $d(x,y)\geq c_1t^{1/m}$.

\textbf{(H2)} There exists a class of approximation to the identity
$\{\mathcal{A}_t\}_{t>0}$ satisfying (\ref{decaycondition}) such that the
operators $(T - \mathcal{A}_tT)$ and $(T - T\mathcal{A}_t)$ have associated kernels
$K^1_t (x, y)$ and $K^2_t (x, y)$ so that there exist
$1 < p_0 < \vc$ and $\delta>0$ such that for any ball $B\subset X$ we have
\begin{equation}\label{eq1-maintheorem}
\Big(\int_{S_j(B)}|K^i_{r^m_B}(z,y)|^{p_0}\dy\Big)^{1/p_0}\lesi 2^{-j\delta}V(2^jB)^{1/p_0-1}
\end{equation}
for all $z\in B$, all $j\geq 2$ and $i=1,2$.

It was proven in \cite{DM} that if $T$ is an operator satisfying
(H1) or (H2) above, then $T$ bounded on $L^p(X)$ for $1 < p<2$. Note
that condition (H2) does not require the regularity assumption on
space variables. This allows us to obtain $L^p$-boundedness of
certain singular integrals with nonsmooth kernels such as the
holomorphic functional calculi and spectral multipliers of $L$, see Subsections 5.1 and 5.2.

We now prove the following
theorems:
\begin{thm}\label{thm-appl1}
Let $T$ be an operator satisfying (H1). Then for any $w\in
A_1$, $T$ and $T^*$ are bounded from $wL^\vc(X)$ to $\BMOA$ and from
$wL^\vc(X)$ to ${\rm BMO}_{\mathcal{A}^*}(X)$. Then, by interpolation, $T$ is
bounded on $L^p_w(X)$ for all $p\in (1,\vc)$ and $w\in A_p$.
\end{thm}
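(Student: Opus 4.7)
The plan is to apply the interpolation theorem (Theorem \ref{interpolation1}) at $s=1$; the endpoint hypothesis at $s=1$ reduces to showing that, for every $w\in A_1$, the operator $T$ maps $wL^\infty(X)$ into $\BMOA$ and $T^*$ maps $wL^\infty(X)$ into $\text{BMO}_{\mathcal{A}^*}(X,w)$, which then delivers the $L^p_w$-boundedness for all $1<p<\infty$ and $w\in A_p$. Since $T^*$ satisfies the analogue of (H1) with $\mathcal{A}_t$ replaced by $\mathcal{A}_t^*$ and the kernels $K^1_t,K^2_t$ interchanged, by symmetry it suffices to handle $T$. Fix $f\in L^\infty(X)$, $w\in A_1$, and a ball $B$ of radius $r_B$, and set $t_B=r_B^m$; the task is to control $\frac{1}{w(B)}\int_B |(I-\mathcal{A}_{t_B})T(wf)(x)|\,\dx$ by a constant multiple of $\|f\|_{L^\infty}$.

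Following the standard Calder\'on--Zygmund strategy, split $wf = g_1+g_2$ with $g_1=(wf)\chi_{\kappa B}$ and $g_2=(wf)\chi_{(\kappa B)^c}$, where $\kappa$ is chosen large enough (depending on $c_1$ in (H1)) that the kernel bound of (H1) is available whenever $x\in B$ and $y\in(\kappa B)^c$. For the local piece $g_1$, invoke the self-improvement $A_1\subset RH_{p_0}$ for some $p_0>1$ (Lemma \ref{weightedlemma1}(v)); combined with the Gaussian bound on $\mathcal{A}_{t_B}$ and the $L^{p_0}$-boundedness of $T$ from \cite{DM}, the composite $(I-\mathcal{A}_{t_B})T$ is bounded on $L^{p_0}(X)$ with norm uniform in $t_B$. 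H\"older's inequality on $B$, the reverse H\"older estimate on $\kappa B$, and the $A_1$-doubling $w(\kappa B)\lesi w(B)$ combine to yield $\int_B |(I-\mathcal{A}_{t_B})T g_1|\,\dx \lesi \|f\|_{L^\infty}w(B)$.

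For the nonlocal piece $g_2$, since $x\in B$ lies outside $\supp(g_2)$, the kernel representation together with (H1) gives
$$|(I-\mathcal{A}_{t_B})Tg_2(x)| \lesi \|f\|_{L^\infty}\sum_{j\geq 2} 2^{-j\alpha}\,\frac{w(2^jB)}{V(2^jB)},$$
after using $d(x,y)\approx 2^j r_B$ and $V(x,d(x,y))\approx V(2^jB)$ on each annulus $S_j(B)$. The crucial observation is the uniform inequality $\frac{w(2^jB)}{V(2^jB)}\lesi \frac{w(B)}{V(B)}$, a consequence of $A_1$ via $\frac{w(B')}{V(B')}\lesi \mathrm{ess\,inf}_{B'}w$ applied at $B' = 2^jB$, combined with $\mathrm{ess\,inf}_{2^jB}w \leq \mathrm{ess\,inf}_B w$. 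Summing the convergent geometric series in $\alpha>0$ and integrating over $B$ yields $\int_B|(I-\mathcal{A}_{t_B})Tg_2|\,\dx \lesi \|f\|_{L^\infty}w(B)$. Combining the two pieces and taking the supremum over $B$ gives $\|T(wf)\|_{\BMOA}\lesi \|f\|_{L^\infty}$; running the parallel argument for $T^*$ (with $\mathcal{A}_t^*$ and $K^2_t$ in place of $\mathcal{A}_t$ and $K^1_t$) completes the hypothesis check and Theorem \ref{interpolation1} delivers the conclusion.

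The step I expect to require the most care is the local piece: naively using the $L^2$-boundedness of $T$ would force $w\in RH_2$, strictly stronger than $A_1$. The escape is to work at an exponent $p_0>1$ just above $1$ produced by $A_1\Rightarrow RH_{p_0}$; the resulting $p_0$ depends on $w$, but since we are verifying a bound for a fixed $w$, this is harmless. Ensuring that the norm of $(I-\mathcal{A}_{t_B})T$ on $L^{p_0}$ is controlled independently of $t_B$ is another subtle point, handled by the uniformity (in $t$) of the Gaussian bound on $\mathcal{A}_t$.
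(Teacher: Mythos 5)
Your proposal is correct and follows essentially the same strategy as the paper's proof: split $wf$ into a local piece $(wf)\chi_{cB}$ handled via $A_1\Rightarrow RH_{r}$ self-improvement, H\"older, and $L^p$-boundedness of $T$ (the paper passes through $M(T(f_1w))$ rather than viewing $(I-\mathcal{A}_{t_B})T$ directly as an $L^{p_0}$-bounded operator, but this is a cosmetic difference), and a far piece handled by the (H1) kernel decay on annuli together with the $A_1$ inequality $w(2^jB)/V(2^jB)\lesssim w(B)/V(B)$, then invoke the interpolation theorem with $s=1$.
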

\begin{proof} For $f\in L^\vc$, we claim that
$$
\f{1}{w(B)}\int_B |(I- \mathcal{A}_{t_B})T(fw)(x)|\dx \leq C\|f\|_{L^\vc}
$$
for any ball $B\subset X$.

Set $f=f_1+f_2$ where $f_1=f\chi_{cB}$ with $c=\max\{c_1,4\}$. We
have
\begin{equation*}
\begin{aligned}
\f{1}{w(B)}\int_B |(I-\mathcal{A}_{t_B})T(fw)(x)|\dx&\leq \f{1}{w(B)}\int_B
|(I-\mathcal{A}_{t_B})T(f_1w)(x)|\dx\\
&~~~+\f{1}{w(B)}\int_B |(I-\mathcal{A}_{t_B})T(f_2w)(x)|\dx\\
&=I_1+I_2.
\end{aligned}
\end{equation*}
Let us estimate $I_1$ first. Since $w\in A_1$ then there exists
$r>1$ such that $w\in RH_r$. Using the $L^p$ boundedness of $T$ and
the Hardy-Littlewood maximal function, we have
\begin{equation*}
\begin{aligned}
I_1&\leq c\f{1}{w(B)}\int_B M(T(f_1w))(x)\dx\\
&\leq c\f{1}{w(B)}\|T(f_1w)\|_{L^r}V(B)^{1/r'}\\
&\leq c\|f\|_{L^\vc}\f{1}{w(B)}\Big(\int_{cB}w^r(x)\dx\Big)^{1/r}V(B)^{1/r'}\\
&\leq
c\|f\|_{L^\vc}\f{1}{w(B)}\f{w(B)}{V(B)}V(B)^{1/r}V(B)^{1/r'}=c\|f\|_{L^\vc}.
\end{aligned}
\end{equation*}
For the second term, by (b) we have
\begin{equation*}
\begin{aligned}
I_2&\leq \f{1}{w(B)}\int_B \int_{X\backslash
cB}K^1_{t_B}(x,y)(f_2w)(y)|\dy\dx\\
&\leq \f{1}{w(B)}\int_B \int_{X\backslash
cB}\f{1}{V(x,d(x,y))}\f{r_B^\alpha}{d(x,y)^\alpha}(f_2w)(y)|\dy\dx\\
&\leq c\|f\|_{L^\vc}\f{1}{w(B)}\int_B \int_{X\backslash
cB}\f{1}{V(x,d(x,y))}\f{r_B^\alpha}{d(x,y)^\alpha}w(y)|\dy\dx.
\end{aligned}
\end{equation*}
Since $c>4$, we have
\begin{equation*}
\begin{aligned}
I_2&\leq c\|f\|_{L^\vc}\sum_{j\geq 2}\f{1}{w(B)}\int_B
\int_{S_j(B)}\f{1}{V(x,d(x,y))}\f{r_B^\alpha}{d(x,y)^\alpha}w(y)|\dy\dx\\
&\leq c\|f\|_{L^\vc}\sum_{j\geq
2}2^{-j\alpha}\f{V(B)}{w(B)}\int_B\f{w(2^jB)}{V(2^jB)}\\
&\leq c\|f\|_{L^\vc}.
\end{aligned}
\end{equation*}
The boundedness of $T^*$ can be treated similarly. This completes our proof.
\end{proof}

\begin{thm}\label{thm-app2}
Let $T$ be an operator satisfying (H2). Then for any $w\in
A_1\cap RH_{p'_0}$, $T$ and $T^*$ are bounded from $wL^\vc(X)$ to $\BMOA$ and from
$wL^\vc(X)$ to ${\rm BMO}_{\mathcal{A}^*}(X)$. Then, by interpolation, $T$ is
bounded on $L^p_w(X)$ for all $p\in (p_0',\vc)$ and $w\in A_{p/p_0'}$.
\end{thm}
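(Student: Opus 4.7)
The plan is to mirror the proof of Theorem \ref{thm-appl1}, replacing the pointwise kernel decay used there by H\"older's inequality combined with the averaged kernel bound (\ref{eq1-maintheorem}) from (H2) and the reverse H\"older hypothesis $w\in RH_{p_0'}$. Once $T$ and $T^*$ are shown to map $wL^\vc(X)$ boundedly into the corresponding weighted BMO spaces for every $w\in A_1\cap RH_{p_0'}$, the $L^p_w(X)$ boundedness on the claimed range of $(p,w)$ follows directly from Theorem \ref{interpolation1} applied with $s=p_0'$.

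Fix a ball $B$ and $f\in L^\vc(X)$, split $f=f_1+f_2$ with $f_1=f\chi_{cB}$ for a sufficiently large constant $c\geq 4$, and denote the resulting pieces of $\f{1}{w(B)}\int_B|(I-\mathcal{A}_{t_B})T(fw)|\dx$ by $I_1$ and $I_2$. For the local piece $I_1$, I would use the $L^{p_0'}(X)$-boundedness of $T$ (available in the range supplied by (H2) and duality, as recalled from \cite{DM}), the elementary inequality $\int_B M(T(f_1w))\dx\lesi V(B)^{1/p_0}\|T(f_1w)\|_{L^{p_0'}}$, and the reverse H\"older estimate $\big(\fint_{cB} w^{p_0'}\dx\big)^{1/p_0'}\lesi w(cB)/V(cB)$. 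Together with the doubling of $w$, these yield $I_1\lesi\|f\|_{L^\vc}$, exactly as in the (H1) case.

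For the non-local piece $I_2$, write $(I-\mathcal{A}_{t_B})T(f_2w)(x)=\int_{X\setminus cB}K^1_{t_B}(x,y)f(y)w(y)\dy$, decompose the exterior into the annuli $\MoB$ for $j\geq 2$, and on each annulus apply H\"older's inequality with exponents $(p_0,p_0')$. By (\ref{eq1-maintheorem}) the kernel factor is bounded by $2^{-j\delta}V(2^jB)^{1/p_0-1}$, and by $RH_{p_0'}$ the weight factor satisfies $\big(\int_{\MoB}w^{p_0'}\dy\big)^{1/p_0'}\lesi V(2^jB)^{1/p_0'}\cdot w(2^jB)/V(2^jB)$; their product collapses to $2^{-j\delta}w(2^jB)/V(2^jB)$. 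Integrating over $x\in B$, dividing by $w(B)$, and applying the $A_1$ consequence of Lemma \ref{weightedlemma2} that $w(2^jB)/V(2^jB)\lesi w(B)/V(B)$ converts the $j$-sum into a convergent geometric series bounded by $\|f\|_{L^\vc}$. The same argument with $K^1_{t_B}$ replaced by $K^2_{t_B}$ handles $T^*$ into ${\rm BMO}_{\mathcal{A}^*}(X,w)$.

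The main delicate point is the exponent-matching between (H2) and the reverse H\"older hypothesis. The volume factor $V(2^jB)^{1/p_0-1}$ produced by the averaged kernel bound can only be eliminated by H\"older against a weight integral raised to the conjugate exponent $p_0'$, yielding the exact cancellation $V(2^jB)^{1/p_0-1}\cdot V(2^jB)^{1/p_0'}=1$; no other integrability index on $w$ would do. Once this cancellation is in place, the $A_1$ hypothesis absorbs the residual weight ratio $w(2^jB)/V(2^jB)$ uniformly in $j$, and the final conclusion on $L^p_w(X)$ is obtained by invoking Theorem \ref{interpolation1} with $s=p_0'$.
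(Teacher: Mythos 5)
Your proposal follows the same route as the paper's proof: split $f$ into a local piece near $B$ and a far piece, handle the local part via the $L^{p_0'}$-boundedness of $T$ together with $w\in RH_{p_0'}$, handle the far part on annuli $S_j(B)$ via H\"older with $(p_0,p_0')$ and the averaged kernel bound from (H2), absorb the residual $w(2^jB)/V(2^jB)$ via $A_1$, and conclude with Theorem \ref{interpolation1} at $s=p_0'$. The only difference is cosmetic (the paper writes $f=f_0+\sum_{j\ge2}f_j$ with $f_0=f\chi_{2B}$ rather than a single near/far split), so this is essentially the paper's argument.
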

\begin{proof} For $f\in L^\vc$ and $w\in A_1\cap RH_{p'_0}$, we will claim that
$$
\f{1}{w(B)}\int_B |(I-A_{t_B})T(fw)(x)|\dx \leq C\|f\|_{L^\vc}
$$
for balls $B\subset X$.

Using the decomposition $f=\sum_{j\geq 2}f_j +f_0$ where $f_0=f\chi_{2B}$ and $f_j=f\chi_{S_j(B)}$, We
have
\begin{equation*}
\begin{aligned}
\f{1}{w(B)}\int_B |(I-\mathcal{A}_{t_B})T(fw)(x)|\dx&\leq \f{1}{w(B)}\int_B
|(I-\mathcal{A}_{t_B})T(f_0w)(x)|\dx\\
&~~~+\sum_{j\geq 2}\f{1}{w(B)}\int_B |(I-\mathcal{A}_{t_B})T(f_jw)(x)|\dx\\
&=I_0+\sum_{j\geq 2}I_j.
\end{aligned}
\end{equation*}
Since $w\in RH_{p_0'}$, using the $L^p$ boundedness of $T$ and
the Hardy-Littlewood maximal function, we have
\begin{equation*}
\begin{aligned}
I_0&\lesi \f{1}{w(B)}\int_B M(T(f_0w))(x)\dx\\
&\lesi \f{1}{w(B)}\|T(f_1w)\|_{L^{p_0'}}V(B)^{1/p_0}\\
&\lesi \|f\|_{L^\vc}\f{1}{w(B)}\Big(\int_{2B}w^{p_0'}(x)\dx\Big)^{1/p_0'}V(B)^{1/p_0}\\
&\lesi \|f\|_{L^\vc}\f{1}{w(B)}\f{w(B)}{V(B)}V(B)^{1/p_0}V(B)^{1/p_0'}=c\|f\|_{L^\vc}.
\end{aligned}
\end{equation*}
For $j\geq 2$, by (H2) and H\"older's inequality, we have
\begin{equation*}
\begin{aligned}
I_j&\leq \f{1}{w(B)}\int_B \int_{S_j(B)}|K^1_{t_B}(x,y)(f_jw)(y)|\dy\dx\\
&\leq \f{1}{w(B)}\int_B \Big(\int_{S_j(B)}|K^1_{t_B}(x,y)|^{p_0}\dy\Big)^{1/p_0}\Big(\int_{S_j(B)}|f_j(y)w(y)|^{p'_0}\dy\Big)^{1/p'_0}\dx\\
&\lesi \f{V(B)}{w(B)}2^{-j\delta}V(2^jB)^{1/p_0-1}\|f\|_{L^\vc}\Big(\int_{2^jB)}|w(y)|^{p'_0}\dy\Big)^{1/p'_0}\\
&\lesi \f{V(B)}{w(B)}2^{-j\delta}V(2^jB)^{1/p_0-1}\|f\|_{L^\vc}\f{w(2^jB)}{V(2^jB)}V(2^jB)^{1/p_0'}\\
&\lesi 2^{-j\delta}\f{V(B)}{w(B)}\f{w(2^jB)}{V(2^jB)}\|f\|_{L^\vc}.
\end{aligned}
\end{equation*}
Since $w\in A_1$,
$$
\f{V(B)}{w(B)}\f{w(2^jB)}{V(2^jB)}\leq C.
$$
Therefore,
$$
\sum_{j\geq 2}I_j\lesi \sum_j 2^{-j\delta}\|f\|_{L^\vc}\lesi \|f\|_{L^\vc}
$$
provided $\delta>0$.

This yields that $T$ is bounded from $wL^\vc(X)$ to $\BMOA$. The boundedness of $T^*$ can be treated similarly. This completes our proof.
\end{proof}
\subsection{Holomorphic functional calculi}
We now give some preliminary definitions of holomorphic functional
calculi as introduced by A. McIntosh \cite{Mc}.\\
Let $0\leq \nu<\pi$. We define the closed sector in the
complex plane $\mathbb{C}$
$$
S_\nu=\{z\in \mathbb{C}: |\arg z|\leq \nu\}
$$
and denote the interior of $S_\nu$ by $S^0_\nu $.\\

Let $H(S^0_\nu)$ be the space of all
holomorphic functions on $S_\nu^0$.
We define the following subspaces of  $H(S^0_\nu)$:
$$
H_\vc(S_\nu^0)=\{b\in H(S^0_\nu): ||b||_\vc<\vc\},
$$
where $||b||_{\vc}=\sup\{|b(z)|: z\in S_\nu^0\}$, and
$$
\Psi(S^0_\nu)=\{\psi\in H(S^0_\nu): \exists s>0, |\psi(z)|\leq
c|z|^s(1+|z|^{2s+1})^{-1}\}.
$$

Let $L$ be a linear
operator of type $\omega$ on $L^2(X)$ with $\omega < \pi/2$; hence
$L$ generates a holomorphic semigroup $e^{-zL}, 0 \leq |Arg(z)| <
\pi/2 - \omega$. Assume the following two conditions.\\
\textbf{Assumption (a):} The holomorphic semigroup $e^{-zL}, 0 \leq
|Arg(z)| < \pi/2 - \omega$, is represented by the kernel $p_z(x, y)$
which satisfies the Gaussian upper bound
\begin{equation}\label{GaussianUpperBound}
|p_z(x,y)|\leq
c_{\theta}\f{1}{V(x,|z|^{1/m})}\exp\Big(-\f{d(x,y)^{m/(m-1)}}{c|z|^{1/(m-1)}}\Big)
\end{equation}
for $x,y \in \RR, |Arg(z)| <
\pi/2 - \theta$ for $\theta> \omega$.

\noindent \textbf{Assumption (b):} The operator $L$ has a  bounded
$H_\vc$-calculus on $L^2(X)$. That is, there exists $c_{\nu,2} >
0$ such that $b(L) \in \mathcal{L}(L^2, L^2)$, and for $b \in
H_\vc(S^0_\nu)$,
$$
||b(L)f||_2\leq c_{\nu,2}||b||_\vc||f||_{2}
$$
for any $f\in L^2(X)$.\\

We have the following result.
\begin{thm}\label{thm-fc}
Let $L$ satisfy conditions (a) and (b) and let $f\in H_\vc(S^0_\nu)$. Then for any $w\in
A_1$, $f(L)$ and $[f(L)]^*$ are bounded from $wL^\vc(X)$ to $\BMOA$ and from
$wL^\vc(X)$ to ${\rm BMO}_{\mathcal{A}^*}(X)$ where $\mathcal{A}_t=e^{-tL}$. Then, by interpolation, $g(L)$ is
bounded on $L^p_w(X)$ for all $p\in (1,\vc)$ and $w\in A_p$.
\end{thm}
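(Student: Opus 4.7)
The plan is to derive Theorem \ref{thm-fc} as a direct application of Theorem \ref{thm-appl1} with $T=f(L)$ and the admissible family $\mathcal{A}_t=e^{-tL}$. Assumption (a) specialized to $z=t>0$ says that the kernel $p_t(x,y)$ of $e^{-tL}$ satisfies the Gaussian bound (\ref{decaycondition}), so $\{e^{-tL}\}_{t>0}$ is an approximation to the identity in the sense of Section 2. Assumption (b) applied with $f\in H_\vc(S^0_\nu)$ gives the $L^2$-boundedness of $f(L)$ (and hence of $[f(L)]^*=\bar{f}(L^*)$); moreover $L^*$ enjoys Assumptions (a) and (b) with semigroup $\mathcal{A}_t^*=e^{-tL^*}$, so the adjoint side of Theorem \ref{thm-appl1} is symmetric.

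The substantive task is to verify hypothesis (H1) for $T=f(L)$: namely that the kernels $K^1_t$, $K^2_t$ of the operators $(I-e^{-tL})f(L)$ and $f(L)(I-e^{-tL})$ obey the polynomial off-diagonal bound
$$|K^i_t(x,y)|\lesi \f{1}{V(x,d(x,y))}\cdot \f{t^{\alpha/m}}{d(x,y)^\alpha}, \qquad d(x,y)\geq c_1 t^{1/m},$$
for some $\alpha>0$. My approach is to first establish this for the dense subclass $\Psi(S^0_\nu)$ by using a Calder\'on reproducing formula to write $\psi(L)=\int_0^\vc \eta(s)\,e^{-sL}\,\f{ds}{s}$ for a suitable $\eta\in \Psi(S^0_\nu)$, then expressing
$$(I-e^{-tL})\psi(L)=\int_0^\vc \eta(s)\int_0^t (-L)\, e^{-(r+s)L}\, dr \,\f{ds}{s}.$$
The Gaussian bounds on the analytic semigroup from Assumption (a) give, via the standard Cauchy-formula argument, that the kernel of $L e^{-uL}$ is dominated by $u^{-1}$ times a Gaussian with parameter $u^{1/m}$. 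Integrating first in $r$ (which produces a factor of $t$) and then in $s$, splitting the $s$-integral at $s\sim d(x,y)^m$ and using the decay of the Gaussian for $s$ small against the factor $s^{-1}$ for $s$ large, yields the desired polynomial decay. To upgrade from $\Psi(S^0_\nu)$ to all of $H_\vc(S^0_\nu)$, I would use McIntosh's convergence lemma: one approximates $f$ by a uniformly bounded sequence $\psi_n\in \Psi(S^0_\nu)$ with $\psi_n(L)\to f(L)$ strongly on $L^2$, and since the kernel estimates above depend only on $\|f\|_\vc$ and the constants from (a), they pass to the limit.

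The main obstacle is the quantitative kernel control of $(I-e^{-tL})f(L)$ in the full $H_\vc$ generality, since the Dunford representation for $f(L)$ does not directly converge absolutely; the Calder\'on reproducing formula together with the convergence lemma is the standard workaround (as in Duong--McIntosh and Duong--Yan). Once (H1) is verified for $f(L)$, the same argument applied to $\bar f$ and $L^*$ establishes (H1) for $[f(L)]^*$ with respect to $\{\mathcal{A}^*_t\}_{t>0}$. Theorem \ref{thm-appl1} then yields boundedness of $f(L)$ from $wL^\vc$ to $\BMOA$ and of $[f(L)]^*$ from $wL^\vc$ to ${\rm BMO}_{\mathcal{A}^*}(X,w)$ for every $w\in A_1$, and interpolation (Theorem \ref{interpolation1} with $s=1$) produces $L^p_w$ boundedness for all $1<p<\vc$ and $w\in A_p$, which is the conclusion of Theorem \ref{thm-fc}.
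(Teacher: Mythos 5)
Your proposal follows exactly the paper's route: reduce to $T=f(L)$ satisfying \textbf{(H1)} with $\mathcal{A}_t=e^{-tL}$, then invoke Theorem \ref{thm-appl1} (which already contains the passage to $\BMOA$, ${\rm BMO}_{\mathcal{A}^*}(X,w)$, and the interpolation step via Theorem \ref{interpolation1}). The only difference is that the paper simply cites Duong--McIntosh for the verification of (H1) on $\Psi(S^0_\nu)$ and invokes the convergence lemma to pass to $H_\vc(S^0_\nu)$, whereas you sketch the Calder\'on-reproducing-formula / Gaussian-kernel computation behind that citation; this is a matter of detail, not a different method.
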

Note that in the similar condition, it was proved in \cite{DM} that the functional calculus $f(L)$ is of weak type $(1,1)$ and hence bounded on $L^p(X)$ for all $1< p<\vc$. Moreover, the weighted estimates for the functional calculus $f(L)$ were investigated in \cite{Ma} in which the author proved that $f(L)$ is bounded on $L^p_w(X)$ for all $1<p<\vc$ and $w\in A_p$. Here, in Theorem \ref{thm-fc}, we prove the weighted endpoint estimates for the functional calculus $f(L)$ and then by the interpolation theorem we regain the weighted estimates for $f(L)$.
\begin{proof}
By the convergence lemma in \cite{Mc}, we can assume that $f\in \Psi(S^0_\nu)$. Then, it was proved in \cite{DM} that $g(L)$ and $[g(L)]^*$ satisfy \textbf{(H1)} with $\mathcal{A}_t=e^{-tL}$. Hence, the desired result follows directly from Theorem \ref{thm-appl1}.
\end{proof}
\subsection{Spectral multipliers}
Let $L$ be a non-negative self-adjoint
operator on $L^2({X})$ and the operator $L$ generates an analytic
semigroup $\{e^{-tL}\}_{t>0}$ whose kernels $p_t(x,y)$ satisfies Gaussian
upper bound:
\begin{equation}\label{GE}
|p_t(x,y)|\leq
\f{C}{V(x,t^{1/m})}\exp\Big(-c\f{d(x,y)^{m/(m-1)}}{t^{1/(m-1)}}\Big)
\end{equation}
for all $x, y\in X$ and $t>0$.

By the spectral theorem, for any bounded Borel function
$F: [0, \infty)\rightarrow {\Bbb C}$, one can define the operator
\begin{equation}\label{eq1-defspectralmultiplier}
F(L)=\int_0^{\infty} F(\lambda) dE(\lambda)
\end{equation}
 which is  bounded on $L^2(X)$. We have the following result.
\begin{thm}\label{thm-spectralmultiplier}
Let $L$ be a non-negative self-adjoint
operator  satisfying (GE). Suppose that $n>s>n/2$ and for any $R>0$ and all Borel
functions $F$ such that {\rm supp}$F\subset [0,R]$,
\begin{equation}\label{eq3.1}
\int_X |K_{F(\sqrt[m]{L})}(x,y)|^2\dx\leq
\f{C}{V(y,R^{-1})}\|\delta_RF\|^2_{L^q}
\end{equation}
for some $q\in [2,\infty]$. Then for any Borel function $F$ such
that $$\sup_{t>0}\|\eta \delta_tF\|_{W^q_s}<\infty,$$
where $ \delta_t F(\lambda)=F(t\lambda)$,
   $\| F \|_{W^q_s}=\|(I-d^2/d x^2)^{s/2}F\|_{L^q}$,
the operator
$F(L)$ and $F(L)^*=\overline{F}(L)$ is bounded from $wL^\vc$ to $\BMOA$ for all $w\in A_1\cap RH_{r_0'}$, where $\mathcal{A}_t=I-(I-e^{-tL})^M$ for $M>\f{s}{m}$ and $r_0=n/s$. Hence by interpolation,
 $F(L)$ is bounded on $L^p_w(X)$ for $w\in A_{p/r_0}$ and $p\in (r_0,\vc)$.
\end{thm}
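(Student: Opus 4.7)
The strategy is to verify the hypothesis (H2) of Theorem \ref{thm-app2} for $T = F(L)$ with the approximation to the identity $\mathcal{A}_t = I - (I - e^{-tL})^M$ and the choice $p_0 = (n/s)' = n/(n-s)$. With this value of $p_0$ we have $p_0' = n/s = r_0$, so Theorem \ref{thm-app2} delivers exactly the conclusion sought: $F(L)$ maps $wL^\vc$ to $\BMOA$ for every $w\in A_1\cap RH_{r_0'}$, and by interpolation $F(L)$ is bounded on $L^p_w(X)$ for $p\in(r_0,\vc)$ and $w\in A_{p/r_0}$. Since $L$ is self-adjoint we have $F(L)^* = \overline{F}(L)$, which satisfies the same hypotheses (the Sobolev condition is symmetric under complex conjugation), so the argument for $T^*$ is identical.

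To verify (H2), note that since $e^{-tL}$ commutes with $F(L)$,
$$T - \mathcal{A}_t T = (I - e^{-tL})^M F(L) = F(L)(I - e^{-tL})^M = T - T\mathcal{A}_t,$$
so the kernels $K^1_t$ and $K^2_t$ coincide. The task reduces to proving, for every ball $B$, every $z\in B$, and every $j\geq 2$,
$$\Big(\int_{S_j(B)} |K_{(I-e^{-r_B^m L})^M F(L)}(z,y)|^{p_0}\,d\mu(y)\Big)^{1/p_0} \lesi 2^{-j\delta}\,V(2^jB)^{1/p_0-1}$$
for some $\delta>0$. I would carry out a dyadic spectral decomposition: pick $\phi\in C_c^\vc(1/2,2)$ with $\sum_{l\in\mathbb{Z}}\phi(2^{-l}\lambda)=1$ for $\lambda>0$ and split
$$F(\lambda)(1-e^{-r_B^m\lambda})^M = \sum_{l\in\mathbb{Z}} F_l(\lambda), \qquad \supp F_l\subset[2^{l-1},2^{l+1}].$$
For each piece, the Plancherel hypothesis (\ref{eq3.1}) (after the change of variable $\lambda\mapsto\lambda^{1/m}$ at scale $R = 2^{(l+1)/m}$) provides an $L^2$-kernel bound in terms of $\|\delta_{2^{l+1}} F_l\|_{L^q}$, which by the Sobolev assumption $\sup_{t>0}\|\eta\delta_t F\|_{W^q_s}<\vc$ together with the elementary bound $|1-e^{-r_B^m\lambda}|^M\lesi\min\{1,(r_B^m 2^l)^M\}$ is summable over $l$, the threshold $M>s/m$ being precisely what compensates for the vanishing of $F_l$ near zero on the small-$l$ side.

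The key step is the passage from this $L^2$ kernel estimate to the $L^{p_0}$ estimate on the annulus $S_j(B)$. Applying H\"older's inequality with the doubling property one obtains
$$\|K_{F_l(L)}(z,\cdot)\|_{L^{p_0}(S_j(B))} \leq V(S_j(B))^{1/p_0-1/2}\,\|K_{F_l(L)}(z,\cdot)\|_{L^2(S_j(B))},$$
and the decay factor $2^{-j\delta}$ is extracted by combining the Plancherel bound with off-diagonal $L^2$ estimates obtained via finite speed of propagation for the wave operators associated to $\sqrt[m]{L}$ (a standard device available for self-adjoint operators with Gaussian heat kernel bounds). The volume factor $V(S_j(B))^{1/p_0-1/2}$ balances against the Plancherel $L^2$ bound exactly when $s>n/2$, which is where the critical exponent $p_0=n/(n-s)$ enters. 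The main technical obstacle I anticipate is precisely this juggling: one must choose $\delta$ small enough that the off-diagonal $L^2$ decay survives the loss $V(S_j(B))^{1/p_0-1/2}$, yet large enough that summing over both dyadic indices $l$ and the annular index $j$ produces the required geometric decay in $j$ with the correct volume normalization. Once the kernel estimate is in place, Theorem \ref{thm-app2} is invoked directly to conclude the theorem.
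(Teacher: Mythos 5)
Your overall framework---reduce to verifying condition (H2) for $T=F(L)$ with $\mathcal{A}_t=I-(I-e^{-tL})^M$ and then invoke Theorem~\ref{thm-app2}---is the same as the paper's. The paper, however, does not re-derive (H2) from the Plancherel hypothesis (\ref{eq3.1}): it cites the proof of Theorem~4.5 of \cite{AD1} for the fact that (H2) holds with \emph{every} $p_0<r_0'$, and then obtains the stated conclusion by letting $p_0\to r_0'$. Your attempt to establish (H2) directly contains a genuine gap at the step you yourself identify as the key one. Since $n/2<s<n$, we have $r_0'=n/(n-s)>2$, so the relevant exponent---whether the endpoint $p_0=r_0'$ you propose, or any $p_0<r_0'$ close to it---is strictly greater than $2$. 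But the inequality
$$\|K_{F_l(L)}(z,\cdot)\|_{L^{p_0}(S_j(B))}\leq V(S_j(B))^{1/p_0-1/2}\,\|K_{F_l(L)}(z,\cdot)\|_{L^2(S_j(B))}$$
is H\"older on a set of finite measure and is valid only for $p_0\leq 2$; for $p_0>2$ the exponent $1/p_0-1/2$ is negative and one cannot in general bound an $L^{p_0}$ norm by an $L^2$ norm over the same set. So this step fails precisely in the regime the theorem needs, and the "balancing" against the Plancherel bound that you describe never gets off the ground. Upgrading from $p_0=2$ to $p_0$ near $r_0'$ requires an extra ingredient---in \cite{AD1} this comes from combining the Plancherel $L^2$ estimate with the Gaussian pointwise kernel bounds---and not a H\"older reduction.

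Two secondary remarks. First, the paper deliberately asserts (H2) only for $p_0<r_0'$ and then passes to the limit; working at the endpoint $p_0=r_0'$ exactly, as you do, is not claimed anywhere. Second, with $p_0=r_0'$ one has $p_0'=r_0$, so Theorem~\ref{thm-app2} would give boundedness for $w\in A_1\cap RH_{r_0}$ (and $p\in(r_0,\infty)$, $w\in A_{p/r_0}$), not for $w\in A_1\cap RH_{r_0'}$; your claim that this "delivers exactly the conclusion sought" conflates $r_0$ and $r_0'$ and is inconsistent with the way Theorem~\ref{thm-app2} is stated. The reduction to (H2) and the commutativity observation $K^1_t=K^2_t$ are fine, but the proof as written does not close.
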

Note that under the condition as in Theorem \ref{thm-spectralmultiplier}, it was prove in \cite{DOS} that the spectral multiplier $F(L)$ is of weak type $(1,1)$ and hence bounded on $L^p_w(X)$. The weighted estimates for $F(L)$ was studied in \cite{AD1, DSY}. The main contribution in Theorem \ref{thm-spectralmultiplier} is the weighted endpoint estimates for the spectral multipliers $F(L)$.

\begin{proof} From the proof of Theorem 4.5 in \cite{AD1}, we get that \textbf{(H2)} holds for $T:=F(L)$ and the family $\mathcal{A}_t:=I-(I-e^{-tL})^M$ for $M>\f{s}{m}$ and all $p_0<r'_0$. Hence, using Theorem \ref{thm-app2} and letting $p_0\to r_0'$ , we get the desired result.
\end{proof}

\end{document}